\documentclass[12pt]{amsart}
\usepackage{geometry} 
\usepackage{amsmath,amsthm,amsfonts,amssymb}
\usepackage{graphics,xy}
\usepackage{url}
\usepackage{xcolor}
\usepackage{graphicx}
\usepackage[capitalise, noabbrev]{cleveref}
\usepackage{ulem}
\geometry{a4paper} 
\usepackage[utf8]{inputenc}
\usepackage{subfigure}


\newtheorem{lemma}{Lemma}[section]
\newtheorem{proposition}[lemma]{Proposition}
\newtheorem{theorem}[lemma]{Theorem}

\theoremstyle{definition}
\newtheorem{definition}[lemma]{Definition}

\newcommand{\Z}{\mathbb{Z}}
\newcommand{\R}{\mathbb{R}}

\newcommand{\C}{\mathbb{C}}

\newcommand{\slC}{\mathrm{SL}_2(\mathbb{C})}
\newcommand{\su}{\mathrm{SU}_2}
\newcommand{\so}{\mathrm{SO}_3}

\DeclareMathOperator{\hol}{hol}
\title[Compatible pants decomposition for surface groups  representations]{Compatible pants decomposition for $\slC$ representations of surface groups}
\author{Renaud Detcherry, Thomas Le Fils and Ramanujan Santharoubane}
\date{} 

\address{Institut de Mathématiques de Bourgogne, UMR 5584 CNRS, Université Bourgogne Franche-Comté, F-2100 Dijon, France}
\email{renaud.detcherry@u-bourgogne.fr}


\address{Institut de Mathématiques de Jussieu , Sorbonne Université, 4 place Jussieu, 75005 Paris}
\email{thomas.le-fils@imj-prg.fr}
\address{Laboratoire de mathématique d’Orsay, UMR 8628 CNRS,
Bâtiment 307, Université Paris-Saclay, 
91405 ORSAY Cedex, FRANCE}
\email{ramanujan.santharoubane@universite-paris-saclay.fr}

\begin{document}

\begin{abstract} For any irreducible representation of a surface group into $\slC$, we show that there exists a pants decomposition where the restriction to any pair of pants is irreducible and where no curve of the decomposition is sent to a trace $\pm 2$ element. We prove a similar property for $\so$-representations. We also investigate the type of pants decomposition that can occur in this setting for a given representation. This result was announced in \cite{DS22}, motivated by the study of the Azumaya locus of the skein algebra of surfaces at roots of unity.
\end{abstract}
\maketitle
\section{Introduction}
\label{sec:intro}Let $\Sigma$ be a compact connected oriented surface (without boundary) of genus at least two. Let $\rho: \pi_1(\Sigma) \longrightarrow G=\slC$ or $\so$ be group homomorphism, we start with the following definition :
\begin{definition}\label{def:compatiblePants}
 A pants decomposition $\mathcal{P}$ of $\Sigma$ is called compatible with $\rho$ if for any curve $c\in \mathcal{P},$ the elements $\pm \rho(c)$ are not unipotent and for any pants $P$ in $\mathcal{P},$ the restriction $\rho|_{\pi_1(P)}$ is irreducible. 
\end{definition}
The purpose of this paper is to prove that any irreducible representation of $\pi_1(\Sigma)$ into $\slC$ or $\so$ admits a compatible pants decomposition. Remark that in the case of a $\slC$ representation, the compatibility condition can be translated into a condition on the traces of the curves of the pants decomposition. The first condition is equivalent to $\mathrm{Tr}(\rho(c))\neq \pm 2,$ and the second is $x^2+y^2+z^2-xyz-4\neq 0,$ where $x,y,z$ are the traces of boundary curves of a pair of pants in the decomposition. One type of pants decomposition important for us is the sausage type which is a pants decomposition in the same orbit, under the action of the mapping class group of $\Sigma$, as the one shown in Figure \ref{fig:saussage}.
\begin{figure}
\includegraphics[scale=0.13]{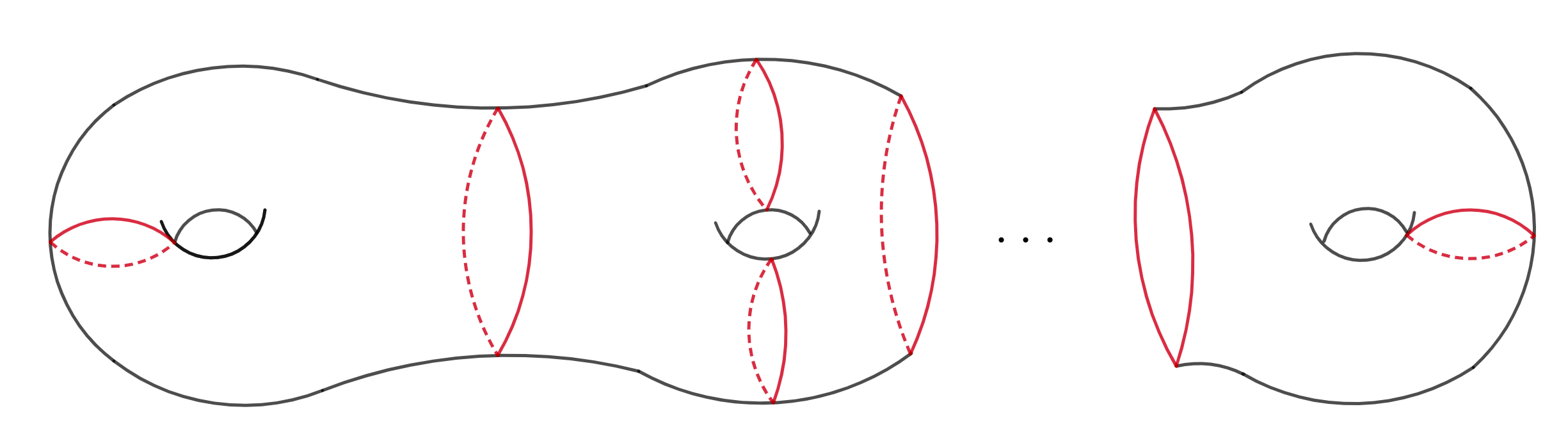}
\caption{A sausage type pants decomposition of $\Sigma$}
\label{fig:saussage}
\end{figure}
\begin{theorem}\label{thm:compatible}
	Let $\rho:\pi_1(\Sigma)\longrightarrow G=\slC$ or $\so$ be an irreducible representation. Then there is a compatible pants decomposition of $\Sigma$ for $\rho.$
	Moreover, if $\rho$ is a representation whose image is not conjugated to the quaternion $Q_8 \subset \su,$ then there is a compatible pants decomposition for $\rho$ of sausage type.
\end{theorem}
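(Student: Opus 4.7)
My strategy is to treat compatibility as an open condition on the character variety and navigate the graph of pants decompositions (connected by elementary flip and S-moves) to find one that avoids the bad locus. Fix a decomposition $\mathcal{P}=\{c_1,\ldots,c_{3g-3}\}$: the representations failing compatibility with $\mathcal{P}$ form the Zariski closed subvariety cut out by
\[
\prod_i (t_{c_i}^2-4)\cdot\prod_{P\in\mathcal{P}}\bigl(t_a^2+t_b^2+t_c^2-t_at_bt_c-4\bigr)=0,
\]
where for each pants $P$, the elements $a,b,c$ are its boundary curves and $t_x=\tr\rho(x)$. The theorem is then equivalent to showing that the union of the compatible loci, as $\mathcal{P}$ ranges over all (respectively over sausage-type) pants decompositions, covers the irreducible part of the character variety (respectively that set minus the $Q_8$ orbit).

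\textbf{Existence.} I would argue by induction on the genus $g$. For the base $g=2$, a sausage decomposition has two non-separating meridians $\alpha,\beta$ and a separating commutator curve $\gamma$; the orbit of such triples under elementary flips is rich, and using standard Fricke--Klein identities one verifies by case analysis that some triple in the orbit is compatible unless $\rho$ has image conjugate to $Q_8$. For the inductive step $g\to g+1$, I would pick a non-separating simple closed curve $\gamma$ with $\tr\rho(\gamma)\neq\pm 2$ such that cutting along $\gamma$ yields a subsurface on which $\rho$ restricts non-trivially enough to apply the inductive hypothesis; existence of such $\gamma$ for irreducible $\rho$ follows from a Zariski-density argument on trace functions of simple closed curves. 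Applying induction to this subsurface and gluing in a pair of pants containing $\gamma$, adjusting by a Dehn twist along $\gamma$ if needed to restore pants irreducibility, yields the claimed decomposition.

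\textbf{Main obstacle.} The heart of the argument is the sausage-type refinement and the $Q_8$ characterization. In a sausage decomposition the separating curves are, as elements of $\pi_1(\Sigma)$, products of commutators $\prod_{i\leq k}[a_i,b_i]$; if $\rho$ has image $Q_8\subset\su$, every commutator is central of order $\leq 2$, so every separating curve is sent to $\pm I$ and has trace $\pm 2$, making sausage compatibility genuinely impossible. The converse is the delicate part: one must prove that if no sausage decomposition is compatible, then for every standard symplectic basis $\{a_i,b_i\}$ of $\pi_1(\Sigma)$ each $[a_i,b_i]$ has trace $\pm 2$, and this robust system of trace identities, combined with irreducibility of $\rho$, forces the image to be a finite $\su$-subgroup whose commutator subgroup is central of order $\leq 2$, hence conjugate to $Q_8$. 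I expect tracking these identities through all elementary moves on sausage decompositions, and the classification of representations with ``nearly central'' commutators, to be the main technical obstacle.
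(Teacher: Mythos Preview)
Your inductive step has a setup error: cutting $\Sigma_{g+1}$ along a single non-separating curve $\gamma$ produces a genus-$g$ surface with \emph{two} boundary components, not a closed surface, so the inductive hypothesis as stated does not apply; and the restriction of $\rho$ to that subsurface need not remain irreducible. ``Gluing in a pair of pants containing $\gamma$'' is also underspecified, since $\gamma$ is only one of three boundary curves of a pair of pants, and a single Dehn twist along $\gamma$ does not in general repair reducibility of that pants.

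The more serious gap is that your flip-navigation/open-condition strategy tacitly assumes the $\mathrm{Mod}(\Sigma)$-orbit of $[\rho]$ is large enough to escape the bad locus, and this fails precisely in the cases that carry the content of the theorem. If $\rho$ has finite image, its orbit in the character variety is finite and the trace functions $t_c$ take only finitely many values as $c$ ranges over all simple closed curves; both your Zariski-density step for choosing $\gamma$ and your $g=2$ case analysis via Fricke identities degenerate to finite checks with no mechanism indicated for why they succeed. The paper handles this regime by a completely different route: it classifies the finite nonabelian subgroups $G\subset\so$, proves that $\mathrm{Mod}(\Sigma)$ acts transitively on epimorphisms $\pi_1(\Sigma)\twoheadrightarrow G$ (split by liftability to $\su$), and then exhibits for each orbit one explicit $G$-cocycle whose holonomy is compatible with a sausage (respectively, for $Q_8$, a square-type) decomposition. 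For infinite image in $\su$ it invokes the Previte--Xia density of the $\mathrm{Mod}(\Sigma)$-orbit, and for non-elementary image it adapts the Gallo--Kapovich--Marden Schottky construction; none of these are accessible through local flip moves and trace identities alone. Finally, your proposed converse for the $Q_8$ characterisation is misframed: failure of every sausage decomposition can also come from a reducible pair of pants or a trace-$\pm 2$ non-separating curve, so ``no sausage works $\Rightarrow$ every $[a_i,b_i]$ has trace $\pm 2$ for every symplectic basis'' is not the correct implication to chase.
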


The second part of this theorem was announced in \cite[Theorem 1.5]{DS22} without a proof. The sausage type pants decomposition is very important in \cite{DS22} and Theorem \ref{thm:compatible} is key result to understand the Azumaya locus of the skein algebra of $\Sigma$ at roots of unity. Notice that the existence of compatible pants decomposition for non elementary representations in $\slC$ is a key step used by Gallo-Kapovich-Marden in \cite{GKM00} to prove that holonomies of $\C \mathrm{P}^1$-structures are Zariski dense in the $\slC$-character variety of a given surface. It is quite intriguing that exact same condition appeared in \cite{DS22} in the context of quantum topology.
 
A result of Baba \cite{B10} shows that the pants decompositions compatible with a non-elementary representation $\rho$ arising from \cite{GKM00} enables us to construct explicitly all the projective structures with holonomy $\rho$. Our result for representations in $\mathrm{SO}_3$ might also be used to describe the branched spherical structures with given holonomy.

The question of finding compatible pants decomposition also emerged in the first named author's thesis, motivated by Witten's asymptotic expansion conjecture. This conjecture expresses the asymptotics of WRT invariants of a $3$-manifold $M$ as a sum of contributions associated to $\su$ representations of $\pi_1(M)$ and involving Chern-Simons invariants and Reidemeister torsions. The first author conjectures that the geometric quantization techniques from \cite{D18} may be used to estimate the contribution of representations that admit a compatible pants decomposition.

The proof of Theorem \ref{thm:compatible} is split in several steps.
In Section \ref{sec:nonElementary} we deal with representations which are non-elementary, representations with dense images in $\su$ and representations with images in a non-compact dihedral group. For the non elementary case, the existence of compatible pants decomposition is already proved in \cite{GKM00}, we adapt these techniques to get the sausage type pants decomposition. For the case of a representation with dense image in $\su$, Theorem \ref{thm:compatible} is direct application of Previte-Xia's result (see \cite{PX02}) that proved that any such representation has a dense orbit in the $\su$ character variety under the action of the mapping class group. The last case is dealt by hand. In section \ref{sec:finite}, we treat the remaining cases, namely representations with finite images. Such representations are classified, the proof is done by studying the orbits under the mapping class group and building explicit compatible pants decomposition for each orbit. 

\textbf{Acknowledgements:} Over the course of this work, the first named author was supported by the project “AlMaRe” (ANR-19-CE40-0001-01). 
The authors thank Maxime Wolff for very helpful conversations. 

\section{Reduction to representations with finite image}

\label{sec:nonElementary} 
In this section we reduce the proof of \cref{thm:compatible} to the case of representations $\rho$ with finite images. 
More precisely we show \cref{thm:compatible} assuming that \cref{prop:finite} holds. This proposition will be proven in \cref{sec:finite}.

\subsection{Mapping class group action}
Let us begin with some observations on the mapping class group action on sets of representations that will be used throughout the rest of the paper.
Recall that the mapping class group of $\Sigma$ is defined by \[\mathrm{Mod}(\Sigma) = \mathrm{Homeo}^+(\Sigma) / \mathrm{Homeo}^{+}_0(\Sigma).\]
The theorem of Dehn, Nielsen and Baer, see for example \cite[Chapter 8]{FM12}, states that its natural action on the fundamental group induces a group isomorphism $\mathrm{Mod}(\Sigma)\to \mathrm{Out}^+(\Sigma)$ on the index two subgroup $\mathrm{Out}^+(\Sigma)$ of $\mathrm{Out}(\Sigma) = \mathrm{Aut}(\Sigma)/\mathrm{Inn}(\Sigma)$ induced by the automorphisms preserving orientation. Therefore $\mathrm{Mod}(\Sigma)$ acts by precomposition as $\mathrm{Out}^+(\pi_1(\Sigma))$ on the space of conjugacy classes of representations $\mathrm{Hom}(\pi_1(\Sigma), G)/G$, for any group $G$.

A key observation to prove \cref{thm:compatible} is that this action preserves the set of representations admitting a compatible pants decomposition.
If $\rho\colon \pi_1(\Sigma)\to G$ is a representation, we denote by $[\rho]$ its conjugacy class.
\begin{lemma}\label{lem:mcg}
Suppose that $\rho_\infty$ admits a compatible pants decomposition $\mathcal P$. If the conjugacy class $[\rho_\infty]$ of $\rho_\infty$ is in the closure of $\mathrm{Mod}(\Sigma)\cdot [\rho]$, then $\rho$ admits a compatible pants decomposition of the type of $\mathcal P$.
\end{lemma}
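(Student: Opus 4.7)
The strategy is to observe that compatibility with a fixed pants decomposition $\mathcal{P}$ cuts out an open subset of the character variety, and then to invoke the closure hypothesis to produce a mapping class group representative of the orbit of $\rho$ that lies in this open set.

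The first step is to show that
\[
U_{\mathcal{P}} \;=\; \bigl\{[\rho'] \in \mathrm{Hom}(\pi_1(\Sigma),G)/G : \rho' \text{ is compatible with } \mathcal{P}\bigr\}
\]
is open. Both compatibility conditions are conjugation-invariant and cut out by trace inequalities. For $c \in \mathcal{P}$, the non-unipotency of $\pm\rho'(c)$ is exactly $\mathrm{Tr}(\rho'(c))\neq \pm 2$ in the $\slC$ setting (and the analogous condition in the $\so$ setting, using the standard embedding $\so\hookrightarrow \mathrm{PSL}_2(\C)$). For each pair of pants $P=\langle a,b\rangle$, the irreducibility of $\rho'|_{\pi_1(P)}$ is equivalent to $\mathrm{Tr}(\rho'([a,b]))\neq 2$, a classical consequence of Fricke's description of the rank-two character variety. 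Because all these traces descend to continuous functions on $\mathrm{Hom}(\pi_1(\Sigma),G)/G$, the set $U_{\mathcal{P}}$ is a finite intersection of complements of closed sets, hence open.

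The second step is then immediate. Since $[\rho_\infty]\in U_{\mathcal{P}}$ and $[\rho_\infty]\in\overline{\mathrm{Mod}(\Sigma)\cdot[\rho]}$, the orbit must meet $U_{\mathcal{P}}$, so there exists $\phi\in\mathrm{Mod}(\Sigma)$ with $\phi\cdot[\rho]\in U_{\mathcal{P}}$. Under the Dehn--Nielsen--Baer identification recalled above, this means $\rho\circ \phi_\ast^{-1}$ is compatible with $\mathcal{P}$, and unpacking the definitions this is precisely the statement that $\rho$ itself is compatible with $\phi^{-1}(\mathcal{P})$. Since $\phi^{-1}(\mathcal{P})$ lies in the $\mathrm{Mod}(\Sigma)$-orbit of $\mathcal{P}$, the two decompositions have the same type, which is what we wanted.

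The only mildly delicate ingredient is the trace characterization of irreducibility (and non-unipotency) for $\so$-representations; this is handled by lifting through the spin cover $\su\to\so$ and invoking the $\slC$ criteria, since the reducibility and unipotency loci match up under this covering. Beyond this routine verification, the argument is a purely formal combination of continuity of traces with equivariance of the mapping class group action, and I do not foresee any substantive obstacle.
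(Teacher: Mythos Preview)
Your proposal is correct and follows essentially the same approach as the paper: show that compatibility with a fixed $\mathcal{P}$ defines an open subset of the character variety, then use the closure hypothesis to find a mapping class $\phi$ with $\phi\cdot[\rho]$ compatible with $\mathcal{P}$, and conclude that $\rho$ is compatible with $\phi^{-1}(\mathcal{P})$. The paper is terser, simply asserting openness ``follows from the definition of compatibility,'' whereas you spell out the trace criteria; your extra remark on the $\so$ case via the spin cover is more than the paper provides but is not needed, since non-triviality and non-abelianness are manifestly open conditions in $\so$ directly.
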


\begin{proof}
For a given pants decomposition $\mathrm P$ of $\Sigma$, let us denote by $\mathcal C(\mathrm P)$ the set of conjugacy classes of representations $\pi_1(\Sigma)\to G$ compatible with $\mathrm P$. It follows from the definition of compatibility that these sets are open.
Therefore there exists a representation in $\left ( \mathrm{Mod}(\Sigma) \cdot [\rho]\right )\cap \mathcal C(\mathcal P)$. Hence there exists $f\in \mathrm{Mod}(\Sigma)$ such that $f\cdot [\rho] \in \mathcal C(\mathcal P)$ and therefore $[\rho]\in \mathcal C(f^{-1}\cdot \mathcal P)$.
\end{proof}

We will thus prove \cref{thm:compatible} by studying the orbits of representations $\pi_1(\Sigma)\to \mathrm{SL}_2(\C)$. 
We will use different methods depending on the image of the representation we wish to study.

\subsection{Non-elementary case}
Let us begin with the case where $\rho$ is a non-elementary representation, \textit{i.e.} the action of its image on the Riemann sphere $\mathbb{CP}^1$ by M\"obius transformations has no finite orbit. We can in that case adapt the strategy of Gallo, Kapovich and Marden in \cite[Part A]{GKM00} to find a compatible pants decomposition.
\begin{proposition}\label{prop:nonelementary}
Let $\rho\colon\pi_1(\Sigma)\longrightarrow \mathrm{PSL}_2(\C)$ be a non-elementary representation. For any trivalent graph $\Gamma$ with $3g-3$ edges that has at least one one-edge loop, there is a pants decomposition of $\Sigma,$ with associated graph $\Gamma$ which is compatible with $\rho$.
\end{proposition}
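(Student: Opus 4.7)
I would adapt the inductive construction of Gallo--Kapovich--Marden \cite{GKM00}, but keep careful track of the combinatorial type of the pants decomposition at every step. The cleanest framework is to prove a strengthened statement simultaneously for surfaces with boundary: for every compact oriented $\Sigma'$ of genus $g'$ with $n' \geq 0$ boundary components, every non-elementary $\rho'\colon \pi_1(\Sigma') \to \mathrm{PSL}_2(\C)$ satisfying $\mathrm{Tr}(\rho'(\partial_i)) \neq \pm 2$ on each boundary loop, and every trivalent graph $\Gamma'$ with $n'$ tails and $3g'-3+n'$ internal edges that has at least one tail or one-edge loop, there is a compatible pants decomposition of $\Sigma'$ realizing $\Gamma'$. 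The proposition is the case $n=0$, and the one-edge-loop hypothesis is exactly what allows the induction to start on a closed surface. Throughout, I use the basic GKM input that a non-elementary subgroup of $\mathrm{PSL}_2(\C)$ contains abundant loxodromic elements with controllable fixed-point configurations, obtained by ping-pong on $\mathbb{CP}^1$.

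\textbf{Inductive step.} The induction is on the complexity $\xi(\Sigma') = 3g' - 3 + n'$. Pick a vertex $v_0$ of $\Gamma'$ carrying a tail or a one-edge loop. Its one or two ``outgoing'' edges correspond to simple closed curves $\alpha$ (and possibly $\beta$) in $\Sigma'$ whose removal cuts off a complexity-1 subsurface $\Sigma_0$ containing the pants at $v_0$: either a one-holed torus (loop case) or a four-holed sphere (tail case). The technical heart is choosing $\alpha$ (and $\beta$) so that (i) $\mathrm{Tr}(\rho'(\alpha)), \mathrm{Tr}(\rho'(\beta)) \neq \pm 2$; (ii) the restriction of $\rho'$ to each component of the cut remains non-elementary; and (iii) the pants at $v_0$ is irreducible. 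Granted such a choice, one completes $\Sigma_0$ using a complexity-$1$ base case (one extra non-separating curve in the one-holed torus, giving the pants at $v_0$), and applies the inductive hypothesis to the lower-complexity remainder $\Sigma' \setminus \Sigma_0$ with the subgraph of $\Gamma'$ obtained by deleting $v_0$, which automatically has a tail coming from $\alpha$ so no one-edge-loop hypothesis is needed.

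\textbf{Main obstacle.} The crux is producing a single simple closed curve of a prescribed topological type whose image under $\rho'$ satisfies several open non-degeneracy conditions at once. The key tool is the GKM ping-pong argument: starting from two loxodromic elements in $\rho'(\pi_1(\Sigma'))$ with pairwise disjoint fixed points on $\mathbb{CP}^1$, high-power products form an abundant family of loxodromic elements whose axes avoid any prescribed finite list of ``bad'' configurations. Combining this with the change-of-coordinates principle in $\mathrm{Mod}(\Sigma')$ -- which says that any two simple closed curves of the same topological type differ by a mapping class -- one translates abstract elements of $\pi_1(\Sigma')$ (e.g. commutators, for a separating curve bounding a one-holed torus) into simple closed curves realizing conditions (i)--(iii). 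The delicate step to check carefully is that the bad loci in the space of simple closed curves of each fixed topological type are nowhere dense; once that is established, the inductive skeleton above delivers the pants decomposition with the exact graph type $\Gamma$.
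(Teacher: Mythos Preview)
Your approach is in the same spirit as the paper's---both adapt Gallo--Kapovich--Marden while tracking the combinatorial type---but the structure is quite different. The paper uses the GKM construction essentially as a black box: reserve a special handle, cut $g-1$ loxodromic curves to reduce to a genus-one surface with $2g-2$ boundary components, then iteratively pair boundary components into Schottky pairs of pants, finishing with the special handle. The only new ingredient the paper supplies is a short, purely combinatorial lemma: the freedom in choosing \emph{which} two boundary components to pair at each step already suffices to realize any trivalent $\Gamma$ with at least one one-edge loop. Your direct induction on complexity, removing one pair of pants at a time in an order dictated by $\Gamma$, is workable in principle but forces you to rethread GKM's non-elementariness preservation (the special-handle device) through a different cutting order; the paper simply inherits this from GKM and never has to reargue it. The paper's route is shorter and cleaner precisely because it isolates the new content as a graph-theoretic statement.

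There is one genuine omission in your proposal: the pentagon representations in genus $2$. These are non-elementary representations into $\mathrm{PSL}_2(\C)$ for which the GKM construction itself fails, so ``adapting GKM'' does not cover them. The paper treats this case separately, invoking \cite[Proposition~5.1]{LF19} to produce a sausage-type decomposition with one Schottky pair of pants and one non-elementary pair of pants with loxodromic boundaries. Your inductive scheme, as written, would break down here for the same reason GKM does, and you need to handle this exception explicitly.
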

\begin{proof}

Let us begin by recalling the main steps of the construction by Gallo, Kapovich and Marden in \cite{GKM00} of a Schottky pants decomposition for $\rho$: a pants decomposition such that the restriction of $\rho$ to each pair of pants is an isomorphism onto a Schottky group.
Note that this construction of Gallo, Kapovich and Marden works for every non-elementary representation $\pi_1(\Sigma)\longrightarrow \mathrm{PSL}_2(\C)$ except in genus $g=2$ for the pentagon representations. We suppose for now that $\rho$ is not a pentagon representation.

The first step is to find special handle in $\Sigma$. That is a handle $\mathcal H$ whose fundamental group $\pi_1(\mathcal H)$ is sent by $\rho$ onto a non-elementary subgroup of $\mathrm{SL}_2(\C)$. 
This handle $\mathcal H$ allows us to find $g-1$ disjoint simple curves away from it, that are sent by $\rho$ to loxodromic elements.
Cutting the surface along those curves leads to a genus one surface with $2g-2$ boundary components.
Choosing any two of these components, we find a curve separating them from rest of the surface, and such that the restriction of $\rho$ to the pair of pants they bound is an isomorphism onto a Schottky group.
Cutting along this curve takes off a pair of pants and gives a genus one surface with $2g-3$ boundary components.
We repeat the same procedure until we get a genus one surface with two boundary components where a special cutting process is applied to get a Schottky pants decomposition. This process finds a curve bounding the two boundary components and cuts the handle.
We now show that choosing wisely the curves at each step allows us to create a decomposition with any trivalent graph with $3g-3$ edges and at least one one-edge loop. We thus are reduced to the following combinatorial lemma.

\begin{lemma}
Any trivalent graph $\Gamma$ with $3g-3$ edges that has at least one one-edge loop can be created by this procedure.
\end{lemma}
\begin{proof}
We start from a genus one surface $\Sigma$ with $2g-2$ boundary components that come from cutting $g-1$ curves from a closed surface. 
Let us label the boundary components by integers $1\leqslant k\leqslant g-1$ with the same label if they come from cutting the same curve.
Pick a one-edge loop $e$.
Let us consider the graph $\Gamma'$ that is the graph $\Gamma$ with $e$ and all the edges connected to it removed, as for example in \cref{fig:graphe1}.
\begin{figure}[h]
	\scalebox{1.2}{
\begingroup%
  \makeatletter%
  \providecommand\color[2][]{%
    \errmessage{(Inkscape) Color is used for the text in Inkscape, but the package 'color.sty' is not loaded}%
    \renewcommand\color[2][]{}%
  }%
  \providecommand\transparent[1]{%
    \errmessage{(Inkscape) Transparency is used (non-zero) for the text in Inkscape, but the package 'transparent.sty' is not loaded}%
    \renewcommand\transparent[1]{}%
  }%
  \providecommand\rotatebox[2]{#2}%
  \newcommand*\fsize{\dimexpr\f@size pt\relax}%
  \newcommand*\lineheight[1]{\fontsize{\fsize}{#1\fsize}\selectfont}%
  \ifx\svgwidth\undefined%
    \setlength{\unitlength}{198.25041148bp}%
    \ifx\svgscale\undefined%
      \relax%
    \else%
      \setlength{\unitlength}{\unitlength * \real{\svgscale}}%
    \fi%
  \else%
    \setlength{\unitlength}{\svgwidth}%
  \fi%
  \global\let\svgwidth\undefined%
  \global\let\svgscale\undefined%
  \makeatother%
  \begin{picture}(1,0.19460863)%
    \lineheight{1}%
    \setlength\tabcolsep{0pt}%
    \put(0,0){\includegraphics[width=\unitlength,page=1]{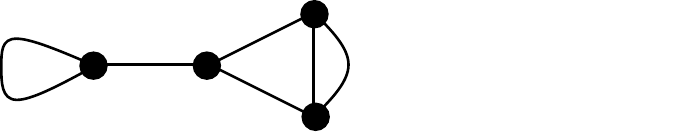}}%
    \put(0.16984097,0.00392075){\makebox(0,0)[lt]{\lineheight{1.25}\smash{\begin{tabular}[t]{l}$\Gamma$\end{tabular}}}}%
    \put(0,0){\includegraphics[width=\unitlength,page=2]{graphe1.pdf}}%
    \put(0.78124279,0.00296539){\makebox(0,0)[lt]{\lineheight{1.25}\smash{\begin{tabular}[t]{l}$\Gamma'$\end{tabular}}}}%
  \end{picture}%
\endgroup%
}
\caption{Example of a trivalent graph $\Gamma$ and $\Gamma'$.}
\label{fig:graphe1}
\end{figure}
Let us cut $\Gamma'$ along $g-1$ edges that do not disconnect it.
We obtain a new graph $\Gamma'$ with $2g-2$ boundary components that we label with integers $1\leqslant k \leqslant g-1$. We require that two boundaries have the same label if they come from the same edge, see the left side of \cref{fig:graphe2}.

The graph $\Gamma'$ has $2g-2$ boundary components and $2g-3$ vertices. 
Therefore one of the vertices has two boundary components, let us choose such a vertex $f$.
Denote by $x$ and $y$ the labels of the boundary components next to $f$. Let us cut the last edge joining $f$ to the rest of the graph and remove the $f$ from $\Gamma'$. We give a new label $z$ to the resulting boundary component, see the right side of \cref{fig:graphe2}.
In the surface $\Sigma$, we pick a curve that that bounds the curves labeled by $x$ and $y$ and take off the pair of pants of pants it defines.
We label by $z$ the new boundary curve. 
\begin{figure}[h]
	\scalebox{1.2}{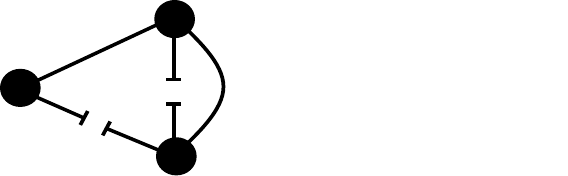}
\caption{The graph $\Gamma'$ in two of the steps.}
\label{fig:graphe2}
\end{figure}
Repeat this procedure until $\Gamma'$ has only one vertex, with two boundary components.
Note that at each step the number of vertices decreases by one, and so does the number of boundary components of $\Gamma'$.
We are left with $\Sigma$ of genus one surface with two boundary components. We then apply the special cutting procedure to finish creating the pants decomposition.
This pants decomposition is isomorphic to $\Gamma$ by construction.
\end{proof}

We now deal with the special case in genus two where $\rho$ is a pentagon representation.
By \cite[Proposition 5.1]{LF19}, we can make the mapping class group act so that the sausage type pants decomposition of \cref{fig:sausage} satisfies:
\begin{itemize}
\item One of the pair of pants is sent by $\rho$ isomorphically onto a Schottky group
\item The other pair of pants is sent by $\rho$ to a non-elementary group and each of its curves is sent by $\rho$ to a loxodromic element.
\end{itemize}
This decomposition is thus compatible with $\rho$.
\end{proof}
We have proven \cref{prop:nonelementary} for representation with values in $\mathrm{PSL}_2(\C)$, which is stronger than the same result for representation in $\mathrm{SL}_2(\C)$. In particular the pentagon representations are not representation $\pi_1(\Sigma)\longrightarrow \mathrm{SL}_2(\C)$: they do not lift do $\mathrm{SL}_2(\C)$.

\subsection{Elementary case}
We now turn to the case where $\rho\colon \pi_1(\Sigma)\longrightarrow \mathrm{PSL}_2(\C)$ is elementary: the action of its image on $\mathbb{CP}^1$ has finite orbits. It is known, see for example \cite[Chapter 5]{R19},  that $\rho$ falls in one of the following three categories:
\begin{enumerate}
\item $\rho$ is affine: it has a conjugate into the upper trianglar matrices.
\item $\rho$ is spherical: it has a conjugate into the group $\mathrm{PSU}_2 = \mathrm{SO}_3$ that preserves the round metric of $ \mathbb{CP}^1 = \mathbb S^2$.
\item $\rho$ is dihedral: it has a conjugate into the group $D$ of matrices that are either diagonal or that have their two diagonal entries vanishing.
\end{enumerate}

Observe that affine representations are reducible. We will therefore only consider representations that either spherical or dihedral.
We begin with the case where $\rho$ is spherical.

\subsubsection{Spherical case}
We now show \cref{thm:compatible} for representation with infinite image in $\mathrm{SO}_3 = \mathrm{PSU}_2$.
\begin{proposition}\label{prop:denseSU2}
Let $\rho\colon\pi_1(\Sigma)\longrightarrow \mathrm{PSU}_2$ be a representation with infinite non-abelian image, there is a pants decomposition of $\Sigma$ of saussage type which is compatible with $\rho.$
\end{proposition}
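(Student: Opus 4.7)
The plan is to combine \cref{lem:mcg} with the mapping class group density result of Previte--Xia \cite{PX02}. First I would argue that such a $\rho$ must have dense image in $\mathrm{PSU}_2$: the closed subgroups of the compact group $\so = \mathrm{PSU}_2$ are finite, conjugate to $\mathrm{SO}_2$, conjugate to $\mathrm{O}(2)$, or the full group, and the first three possibilities are excluded (the first because the image is infinite, the second because the image is non-abelian, and the third because an $\mathrm{O}(2)$-valued representation falls in the dihedral case of the trichotomy above and is handled separately).

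Given density, the theorem of Previte--Xia \cite{PX02} yields that the orbit $\mathrm{Mod}(\Sigma)\cdot [\rho]$ is dense in the $\mathrm{PSU}_2$-character variety $\mathrm{Hom}(\pi_1(\Sigma),\mathrm{PSU}_2)/\mathrm{PSU}_2$. By \cref{lem:mcg}, it is therefore enough to exhibit a single model representation $\rho_\infty\colon \pi_1(\Sigma)\to \mathrm{PSU}_2$ that is compatible with some fixed sausage-type decomposition $\mathcal P_0$. I would construct $\rho_\infty$ by prescribing, along the sausage graph, rotations with generic axes and generic nonzero angles. Genericity guarantees that no curve of $\mathcal P_0$ is sent to the identity and that on each pair of pants the three boundary generators do not share a common fixed axis on $\mathbb S^2$; thus the restriction $\rho_\infty|_{\pi_1(P)}$ has no invariant line in $\mathbb R^3$ and is irreducible as an $\so$-representation.

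The argument then concludes as follows. Compatibility with $\mathcal P_0$ defines an open set $\mathcal C(\mathcal P_0)$ in the character variety, as already used in the proof of \cref{lem:mcg}. Since the orbit of $[\rho]$ accumulates on $[\rho_\infty]\in \mathcal C(\mathcal P_0)$, some $f\cdot [\rho]$ lies in $\mathcal C(\mathcal P_0)$, whence $[\rho]$ is compatible with the pants decomposition $f^{-1}(\mathcal P_0)$, which is still of sausage type because the mapping class group action preserves the topological type of a pants decomposition.

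The main obstacle is really the construction of $\rho_\infty$: one must prescribe $3g-3$ conjugacy classes along the sausage graph consistent with the gluing relations in $\pi_1(\Sigma)$ and simultaneously meet the pant-wise irreducibility and non-triviality conditions. This is essentially a Fricke-type parametrization exercise (choosing angles and axes to avoid a proper real-algebraic subset of parameters), and is the only step involving explicit computation; everything else is a density-plus-openness argument.
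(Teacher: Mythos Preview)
Your reduction to the dense-in-$\mathrm{SO}_3$ case has a genuine gap. You claim that an infinite non-abelian image dense in $\mathrm{O}(2)\subset\mathrm{SO}_3$ ``falls in the dihedral case of the trichotomy above and is handled separately.'' It is true that such a $\rho$ is dihedral, but look at how the paper's dihedral section actually begins: ``We have proven this result when $\rho$ takes its values in $\mathrm{SU}_2$. Therefore we now assume that $\rho$ does not admit a conjugate in $\mathrm{SU}_2$.'' In other words, the dihedral section \emph{defers} the compact dihedral case back to \cref{prop:denseSU2}; it does not handle it. So the $\mathrm{O}(2)$ case must be treated here, and Previte--Xia does not apply to it (the image is not dense in $\mathrm{SO}_3$). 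The paper closes this case with a separate density statement for $\mathrm{O}_2$-valued representations \cite[Proposition~4.2]{LF21}, landing on a finite dihedral image and then invoking \cref{prop:finite}.

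Even in the dense-in-$\mathrm{SO}_3$ case your argument needs one more ingredient. Previte--Xia gives density only in the \emph{connected component} of $[\rho]$ in $\mathrm{Hom}(\pi_1(\Sigma),\mathrm{SO}_3)/\mathrm{SO}_3$, and there are two such components, distinguished by whether $\rho$ lifts to $\mathrm{SU}_2$. Your generic $\rho_\infty$ must therefore be built in the \emph{same} component as $\rho$, which is not automatic from a genericity argument. The paper sidesteps this by aiming not at a generic $\rho_\infty$ but at a representation with finite non-abelian image: \cref{prop:finite} furnishes, for each finite group and for \emph{each} lifting class, an explicit sausage-compatible model. That is the real payoff of going through finite images rather than a Fricke-type generic construction.
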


Let us begin by recalling the description of the closed subgroups of $\mathrm{SU}_2$.
Let us denote by $K$ the subgroup of $\mathrm{SU}_2$ generated by the diagonal matrices and the matrices with both diagonal entries vanishing. In other words, $K = \mathrm{SU}_2\cap D$.

\begin{lemma}
Let $H$ be a subgroup $\su$. One of the following holds:
\begin{enumerate}
\item $H$ is finite
\item $H$ has a conjugate into the diagonal matrices
\item $H$ has a conjugate dense in $K$
\item $H$ is dense in $\mathrm{SU}_2$.
\end{enumerate}
\end{lemma}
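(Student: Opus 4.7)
The plan is to pass to the topological closure $\overline{H}$ of $H$ inside the compact Lie group $\mathrm{SU}_2$ and classify the closed subgroups that arise. By Cartan's closed subgroup theorem, $\overline{H}$ is an embedded Lie subgroup of $\mathrm{SU}_2$, and its identity component $\overline{H}_0$ is a closed connected Lie subgroup.

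First I would invoke the fact that $\mathfrak{su}_2$ is simple of real dimension three, so its only Lie subalgebras are the trivial one, the one-dimensional subalgebras, and $\mathfrak{su}_2$ itself. All one-dimensional subalgebras are $\mathrm{SU}_2$-conjugate to the Cartan subalgebra of purely imaginary diagonal matrices: via the double cover $\mathrm{SU}_2 \to \mathrm{SO}_3$, one-parameter subgroups correspond to rotation axes in $\mathbb R^3$, which are all $\mathrm{SO}_3$-conjugate. Consequently $\overline{H}_0$ is either trivial, a conjugate of the maximal torus $T$ of diagonal matrices, or all of $\mathrm{SU}_2$.

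The second step is a case analysis driven by $\overline{H}_0$. If $\overline{H}_0$ is trivial, then $\overline{H}$ is discrete and closed in a compact group, hence finite, and so is $H \subseteq \overline{H}$; this is case (1). If $\overline{H}_0 = \mathrm{SU}_2$, then $\overline{H} = \mathrm{SU}_2$ and $H$ is dense, which is case (4). Otherwise $\overline{H}_0 = g T g^{-1}$ for some $g \in \mathrm{SU}_2$. Since $\overline{H}$ normalizes its identity component, $\overline{H} \subseteq g\,N_{\mathrm{SU}_2}(T)\,g^{-1} = g K g^{-1}$, using that $K$ is exactly the normalizer of $T$ in $\mathrm{SU}_2$. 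Because $K/T \cong \mathbb{Z}/2$, the only closed subgroups of $gKg^{-1}$ containing $\overline{H}_0 = gTg^{-1}$ are $gTg^{-1}$ itself and $gKg^{-1}$. In the first subcase $g^{-1} H g \subseteq T$, giving case (2); in the second, $\overline{g^{-1}Hg} = g^{-1}\overline{H}g = K$, so $g^{-1}Hg$ is dense in $K$, giving case (3), since inner automorphisms are homeomorphisms.

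I expect no serious obstacle: the only mildly non-trivial input is the $\mathrm{SU}_2$-conjugacy of one-dimensional subalgebras of $\mathfrak{su}_2$, which is immediate from the rotational picture in $\mathbb R^3$, and the identification $N_{\mathrm{SU}_2}(T) = K$, which follows from a direct computation (an element normalizing $T$ must either centralize it, giving $T$, or swap its two one-dimensional weight spaces, giving the anti-diagonal coset). Everything else is standard structure theory for compact Lie groups applied to $\mathrm{SU}_2$.
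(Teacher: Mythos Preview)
Your proof is correct and follows essentially the same approach as the paper: pass to the closure, use that the Lie subalgebras of $\mathfrak{su}_2$ have dimension $0$, $1$, or $3$, and in the one-dimensional case exploit that $\overline{H}$ normalizes its identity component to land inside $N_{\mathrm{SU}_2}(T)=K$. Your presentation is slightly more streamlined in explicitly invoking the normalizer and the Weyl group $K/T\cong\mathbb Z/2$, but the content is the same.
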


\begin{proof}
Replacing $H$ by its closure if necessary, we may assume that $H$ is closed and is a Lie subgroup of $\mathrm{SU}_2$. 
Let us recall that the Lie alebra $\mathfrak {su}(2)$ is isomorphic to $\mathbb R^3$ endowed with the cross-product. 
This Lie algebra does not admit any two dimensional subalgebra. Hence the Lie algebra $\mathfrak h$ of $H$ must be of dimension $0$, $1$ or $3$.
\begin{itemize}
\item If $\dim \mathfrak h = 0$ then $H$ is a discrete group and since $\mathrm{SU}_2$ is compact $H$ is finite. 
\item If $\dim \mathfrak h = 3$ then $H$ is the whole connected group $\mathrm{SU}_2$.
\item  If $\dim \mathfrak h = 1$ then let us conjugate $H$ to that its connected component containing the identity is 
$$\mathrm{Diag} =
\left \{
\begin{pmatrix}
e^{it} & 0\\
0 & e^{-it}
\end{pmatrix}
\mid t\in \mathbb R\right \}
.$$
Let $h\in H$. The group $h\mathrm{Diag}h^{-1}$ is connected and contains the identity hence is included in $H$. Therefore $h$ must send the eigenvectors of matrices of $\mathrm{Diag}$ to themselfs. Let us denote by $(e_1, e_2)$ the canonical basis of $\mathbb C^2$. The vector $he_1$ is an eigenvector of a matrix in $\mathrm {Diag}$ and thus is in either $\mathbb R e_1$ or $\mathbb R e_2$.
Hence $h$ is either diagonal or has the form 
$\begin{pmatrix}
0 & -e^{-i\theta}\\
e^{i\theta} & 0
\end{pmatrix}.
\qedhere$
\end{itemize}
\end{proof}

It follows that the infinite subgroups of $\mathrm{SO}_3(\R) = \mathrm{PSU}_2$ are, after conjugation either abelian, dense in $\mathrm{O}_2(\R)$ or dense in $\mathrm{SO}_3(\R)$.
The abelian representations are reducible, therefore we will only consider the two other cases.
To prove \cref{prop:denseSU2} it suffices to show that in the closure of the orbit of such a $\rho$, there exists a representation with finite non-abelian image. Indeed by \cref{lem:mcg} and \cref{prop:finite} the representation $\rho$ then admits a compatible pants decomposition of sausage type.
\cref{prop:denseSU2} follows from the following lemma.
\begin{lemma}\label{lem:so3}
Every representation $\rho\colon \pi_1(\Sigma)\longrightarrow \mathrm{PSU}_2$ with infinite non-abelian image admits a representation with finite non-abelian image in the closure of its $\mathrm{Mod}(\Sigma)$-orbit.
\end{lemma}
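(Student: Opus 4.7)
The plan is to split into two cases based on the classification just established. After conjugation, the closure of the image of $\rho$ in $\mathrm{PSU}_2=\mathrm{SO}_3(\mathbb{R})$ is either all of $\mathrm{SO}_3(\mathbb{R})$ or the dihedral subgroup $\mathrm{O}_2(\mathbb{R})$. In the first case, I would appeal directly to the theorem of Previte--Xia \cite{PX02}, which asserts that the $\mathrm{Mod}(\Sigma)$-orbit of any representation with dense image in $\mathrm{SU}_2$ is dense in the $\mathrm{SU}_2$-character variety of $\Sigma$ (and the analogous statement for $\mathrm{SO}_3$ follows). Since there exist representations of $\pi_1(\Sigma)$ with finite non-abelian image---for instance surjections onto a finite non-abelian subgroup of $\mathrm{SO}_3(\mathbb{R})$ such as $A_4$, $S_4$ or $A_5$, which exist for every $g\geq 2$ using the perfectness of these groups---any such representation corresponds to a point of the character variety, and one finds it in the closure of the orbit of $[\rho]$.

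In the dihedral case, I would pass to a double cover. The composition of $\rho$ with the quotient $\mathrm{O}_2(\mathbb{R})\to\mathbb{Z}/2\mathbb{Z}$ defines a non-trivial epimorphism $\varepsilon\colon\pi_1(\Sigma)\to\mathbb{Z}/2\mathbb{Z}$ and a corresponding connected double cover $\pi\colon\tilde\Sigma\to\Sigma$ of genus $\tilde g=2g-1$. The restriction $\tilde\rho$ of $\rho$ to $\pi_1(\tilde\Sigma)$ takes values in $\mathrm{SO}_2(\mathbb{R})$ and thus factors through a homomorphism $H_1(\tilde\Sigma;\mathbb{Z})\to\mathrm{SO}_2(\mathbb{R})$ that is anti-equivariant under the deck involution $\sigma_*$. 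On the anti-invariant symplectic sublattice $V=H_1(\tilde\Sigma;\mathbb{Z})^-$ of rank $2g-2$, the stabilizer of $\varepsilon$ in $\mathrm{Mod}(\Sigma)$---a finite-index subgroup---acts through a subgroup of finite index in $\mathrm{Sp}(V;\mathbb{Z})$, which is a standard Prym-representation statement. Since $\rho$ has dense image in $\mathrm{O}_2(\mathbb{R})$, the restriction $\tilde\rho$ has dense image in $\mathrm{SO}_2(\mathbb{R})$ and hence corresponds to a non-torsion point of the dual torus $\mathrm{Hom}(V,\mathrm{SO}_2(\mathbb{R}))$; the induced $\mathrm{Sp}(V;\mathbb{Z})$-action on this torus has dense orbits at non-torsion points. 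Consequently the closure of the orbit of $\rho$ contains representations $\rho'$ whose restriction $\tilde\rho'$ is cyclic of any prescribed finite order $n\geq 3$, which yields a finite dihedral image of order $2n\geq 6$ for $\rho'$, hence non-abelian.

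The main obstacle is the density of the $\mathrm{Sp}(V;\mathbb{Z})$-action on the character torus at non-torsion points; this I would reduce to the classical density of $\mathrm{SL}_2(\mathbb{Z})$-orbits of irrational points on $\mathbb{T}^2$ by restricting to a suitable symplectic $\mathbb{Z}^2$-sub-plane of $V$, combined with the large-image Prym statement recalled above. A supporting point to check is that the ``reflection part'' of an extension $\rho'$ of $\tilde\rho'$ to $\pi_1(\Sigma)$ can also be taken in the orbit closure; this is controlled by the MCG action on coset representatives of $\pi_1(\tilde\Sigma)$ in $\pi_1(\Sigma)$ and should not be an essential obstruction given the flexibility furnished by the density on $V$.
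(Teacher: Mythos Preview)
Your treatment of the dense-image case is essentially the paper's: both invoke Previte--Xia \cite{PX02}. One point you should not skip is that Previte--Xia only gives density in the connected component of $[\rho]$ in the $\mathrm{SO}_3$-character variety, and these components are indexed by whether the representation lifts to $\mathrm{SU}_2$; you therefore need that \emph{each} component contains a representation with finite non-abelian image (the paper appeals to \cite{M12,JW94} for this, though any finite dihedral example of the right lifting type would do).

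For the case where the image is dense in $\mathrm{O}_2(\mathbb{R})$ your route is genuinely different. The paper simply quotes \cite[Proposition~4.2]{LF21}: the $\mathrm{Mod}(\Sigma)$-orbit of such a $\rho$ is already known to be dense in the set of non-abelian $\mathrm{O}_2(\mathbb{R})$-representations with the same lifting type to $\mathrm{SU}_2$, and that set visibly contains finite-dihedral representations; the lemma follows in one line. Your Prym-lattice strategy is a plausible way to \emph{reprove} that density statement, and the ingredients you list (finite-index stabilizer of $\varepsilon$, large image in $\mathrm{Sp}(V;\mathbb{Z})$, density of symplectic orbits on tori at irrational points) are the right ones. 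However, the issue you flag at the end is real, not cosmetic. The restriction $\tilde\rho$ does not literally factor through $V=H_1(\tilde\Sigma;\mathbb{Z})^-$: on the $\sigma_*$-invariant sublattice the anti-equivariance only forces $\tilde\rho$ to be $2$-torsion, not zero, so the conjugacy class of $\rho$ is not parametrized by a single point of $\mathrm{Hom}(V,\mathrm{SO}_2)$ but carries additional discrete data (which in particular encodes the lifting obstruction). Sorting out how the stabilizer of $\varepsilon$ acts on this full parameter space, and why density on the continuous torus factor suffices to reach a finite-image point, is exactly the content of the proposition the paper cites. Your outline is headed toward a correct statement, but as written it is a sketch with a substantive gap where the paper has a clean one-line reference.
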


We will actually show that for representations in $\mathrm{SO}_3$ with dense image, we can obtain any trivalent graph $\Gamma$.
\begin{lemma}
For every trivalant graph $\Gamma$ with $3g-3$ edges and every representation $\rho\colon \pi_1(\Sigma)\longrightarrow \mathrm{SO}_3$ with dense image, there exists a compatible pants decomposition whose graph is $\Gamma$.
\end{lemma}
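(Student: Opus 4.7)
The plan is to construct, for any pants decomposition $\mathcal{P}$ of $\Sigma$ realizing the graph $\Gamma$, an explicit representation $\rho_0\colon\pi_1(\Sigma)\to\mathrm{SO}_3$ that is compatible with $\mathcal{P}$, and then invoke \cref{lem:mcg} together with Previte-Xia's density theorem \cite{PX02}. Since the latter asserts that the $\mathrm{Mod}(\Sigma)$-orbit of $[\rho]$ is dense in the $\mathrm{SO}_3$ character variety of $\Sigma$, the class $[\rho_0]$ will automatically lie in the closure of $\mathrm{Mod}(\Sigma)\cdot[\rho]$, and \cref{lem:mcg} then yields a compatible pants decomposition for $\rho$ of the same combinatorial type as $\mathcal{P}$, hence with graph $\Gamma$.

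To build $\rho_0$, I would use a Fenchel-Nielsen-type assembly. First, assign to each curve $c\in\mathcal{P}$ a half-turn $\tau_c\in\mathrm{SO}_3$, i.e.\ a rotation by angle $\pi$. Then, for each pair of pants $P\in\mathcal{P}$ with boundary curves $c_1,c_2,c_3$, define $\rho_0|_{\pi_1(P)}$ by sending the three standard boundary generators to three half-turns about pairwise orthogonal axes whose product is the identity in $\mathrm{SO}_3$; such a representation exists and is unique up to conjugation. For each curve $c$, the gluing freedom between the two pants adjacent to $c$ is the centralizer $\mathrm{SO}_2$ of $\tau_c$, so a choice of twist in $\mathrm{SO}_2$ for each of the $3g-3$ curves assembles the pants data into a well-defined homomorphism $\rho_0$ by van Kampen.

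Compatibility of $\rho_0$ with $\mathcal{P}$ is then immediate: every $\rho_0(c)$ is a half-turn and hence is not unipotent, and every pants restriction has boundary trace coordinates $(x,y,z)=(0,0,0)$ after lifting to $\mathrm{SU}_2$, so its Fricke invariant $x^2+y^2+z^2-xyz-4=-4$ is nonzero, forcing irreducibility. The step that will require the most care is matching the Stiefel-Whitney class of $\rho_0$ with that of $\rho$, so that $[\rho_0]$ and $[\rho]$ lie in the same component of the $\mathrm{SO}_3$ character variety where Previte-Xia's density applies. I would handle this by observing that varying the twist parameters realizes both Stiefel-Whitney classes, so the twists can be selected to put $\rho_0$ in the correct component; the openness of the compatibility condition (already noted in the proof of \cref{lem:mcg}) then closes the argument.
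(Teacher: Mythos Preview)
Your overall strategy matches the paper's exactly: invoke Previte--Xia density, exhibit a representation $\rho_0$ compatible with a pants decomposition of graph $\Gamma$ lying in the same connected component as $\rho$, and apply \cref{lem:mcg}. The paper simply cites \cite[Section 6]{M12} and \cite{JW94} for the existence of such a $\rho_0$ in each component, while you attempt an explicit Fenchel--Nielsen construction.

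Your construction is sound up to the component-matching step, but that step contains a genuine gap. You write that the gluing freedom at each curve is the centralizer $\mathrm{SO}_2$ of the half-turn $\tau_c$. This is incorrect: the centralizer of a half-turn in $\mathrm{SO}_3$ is a copy of $\mathrm{O}_2$, not $\mathrm{SO}_2$, since any half-turn about an axis perpendicular to that of $\tau_c$ also commutes with $\tau_c$. This distinction is exactly what breaks your $w_2$ argument. Varying twists continuously inside the identity component $\mathrm{SO}_2$ gives a connected family of representations, so the Stiefel--Whitney class is constant along it; thus ``varying the twist parameters'' in your stated sense \emph{cannot} realize both values of $w_2$. (Your appeal to openness of compatibility does not help either, since a small perturbation stays in the same component.)

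The fix is available but uses precisely the $\mathrm{O}_2\setminus\mathrm{SO}_2$ part of the centralizer that you omitted: replacing one twist by a half-turn about a perpendicular axis flips $w_2$, by the same mechanism as \cref{lemma:non-lifting_elements} (two commuting, distinct order-$2$ elements of $\mathrm{SO}_3$ lift to anti-commuting elements of $\mathrm{SU}_2$). With this correction your construction does reach both components and the argument goes through. Note also that Previte--Xia gives density only in the connected component of $[\rho]$, not in the whole character variety---you acknowledge this later, but your opening paragraph states it imprecisely.
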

\begin{proof}
The mapping class group orbit of $\rho$ is dense in its connected component of $\mathrm{Hom}(\pi_1(\Sigma), \mathrm{SO}_3(\R))/\mathrm{SO}_3(\R).$ This is a consequence of the main result of \cite{PX02}, see also \cite[Lemma 4.4]{LF21}. Let us recall that this connected component is the set of all representations $\pi_1(\Sigma)\longrightarrow \mathrm{PSU}_2 = \mathrm{SO}_3(\R)$ that lifts to $\su$ if $\rho$ does (resp. that do not lift if $\rho$ does not). Each of these components admit at least one representation with a compatible pants decomposition of graph $\Gamma$, see \cite[Section 6]{M12} and \cite{JW94}. The result follows from \cref{lem:mcg}.
\end{proof}

We now prove \cref{lem:so3} for the representation $\pi_1(\Sigma)\longrightarrow \mathrm{SO}_3$ whose image is not dense in $\mathrm{SO}_3$.

\begin{proof}
We are left with the case where $\rho$ has its image dense in $\mathrm{O}_2(\R)$. Its $\mathrm{Mod}(\Sigma)$-orbit is dense in the representations $\pi_1(\Sigma)\longrightarrow \mathrm{O}_2(\R)$ that are not abelian and lift to $\su$ if $\rho$ does (resp. do not lift if $\rho$ does not), see \cite[Proposition 4.2]{LF21}.
These sets contain representations with finite non-abelian image. For example we can take a representation with image a finite Dihedral group.
\end{proof}

\subsubsection{Dihedral case}

We now turn to the case where $\rho$ is dihedral, that is when it has image in $D$.
The group $D$ is naturally isomorphic to $\mathbb C^\star \rtimes \mathbb Z_2$. 
Let us denote by $\epsilon\colon \mathbb C^\star \rtimes \mathbb Z_2\longrightarrow \mathbb Z_2$ the projection on the second factor.

\begin{proposition}
Let $\rho\colon\pi_1(\Sigma)\longrightarrow H$ be an irreducible representation with infinite non-abelian image. There exists a decomposition of sausage type which is compatible with $\rho$.
\end{proposition}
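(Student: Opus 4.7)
The plan is to follow the pattern of the spherical subsection: show that the closure of the $\mathrm{Mod}(\Sigma)$-orbit of $[\rho]$ contains a representation with finite non-abelian dihedral image, and then conclude by \cref{lem:mcg} together with the finite-image \cref{prop:finite}.

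First I would extract the cohomological data. Since $\rho$ is non-abelian, $\epsilon\circ\rho\colon\pi_1(\Sigma)\to\mathbb{Z}/2$ is surjective (otherwise $\rho$ factors through the abelian group $\mathbb{C}^\star$); this yields a non-zero class $\omega\in H^1(\Sigma;\mathbb{Z}/2)$ together with an unramified double cover $\widetilde\Sigma\to\Sigma$ with involution $\tau$. The restriction $\widetilde\rho=\rho|_{\pi_1(\widetilde\Sigma)}\colon\pi_1(\widetilde\Sigma)\to\mathbb{C}^\star$ is a character satisfying $\tau^*\widetilde\rho=\widetilde\rho^{-1}$, and it has infinite image because $\rho$ is non-abelian. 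A simple closed curve $\gamma$ with $\omega(\gamma)=1$ is sent by $\rho$ to an anti-diagonal matrix of trace $0$, which is automatically non-unipotent and different from $\pm I$; a curve with $\omega(\gamma)=0$ is sent to a diagonal matrix, so compatibility collapses to $\rho(\gamma)\neq\pm I$. A direct analysis of dihedral subgroups shows that for a pants $P$ with boundaries $\alpha,\beta,\gamma$ (satisfying $\alpha\beta\gamma=1$), compatibility of $\rho$ with $P$ is equivalent to the parity identity $\omega(\alpha)+\omega(\beta)+\omega(\gamma)=0$ being realised with exactly two odd terms, \emph{and} the unique $\omega$-even boundary $\gamma$ satisfying $\rho(\gamma)\neq\pm I$.

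Next I would use the surjection $\mathrm{Mod}(\Sigma)\to\mathrm{Sp}(2g,\mathbb{Z})$ together with the transitivity of $\mathrm{Sp}(2g,\mathbb{F}_2)$ on non-zero vectors of $H^1(\Sigma;\mathbb{F}_2)$ to replace $\rho$ by a mapping-class translate so that, relative to a fixed standard sausage decomposition $\mathcal P_0$ whose meridians represent the symplectic basis elements $a_1,\dots,a_g$ and whose separating curves lie in the commutator subgroup, the class $\omega$ equals $a_1^\star+\cdots+a_g^\star$. The extra middle-handle curves $m_i$ of the sausage satisfy $[m_i]=[a_i]$, so every pants of $\mathcal P_0$ has boundary $\omega$-values $(0,1,1)$; the combinatorial part of the compatibility thus holds automatically, and only the conditions $\rho(s)\neq\pm I$ for the $g-1$ separating curves $s$ of $\mathcal P_0$ remain.

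The space $\mathcal M_\omega$ of $D$-representations with prescribed $\omega$ is an algebraic torus parametrising the anti-$\tau$-invariant characters of $H_1(\widetilde\Sigma;\mathbb{Z})$, on which the $\omega$-stabiliser (a finite-index subgroup of $\mathrm{Mod}(\Sigma)$) acts by algebraic automorphisms. The main step, and the principal obstacle, is to show that for any $\widetilde\rho$ of infinite image, the orbit closure of $[\rho]$ in $\mathcal M_\omega$ contains characters with finite non-abelian image, i.e.\ representations into some finite dihedral subgroup $D_{2n}\subset D$; once this is in place, \cref{lem:mcg} combined with \cref{prop:finite} finishes the argument. This density statement is the $\slC$-dihedral counterpart of the Previte--Xia input used in the $\su$ case; I would attack it by computing the action on $\mathcal M_\omega$ of Dehn twists along curves transverse to each $s$ and verifying that the resulting group of algebraic automorphisms has dense orbits on the infinite-image locus. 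Alternatively, one may bypass density entirely by a direct surgery on $\mathcal P_0$, twisting each offending separating curve $s$ along its adjacent meridian until $\rho(s)\neq\pm I$ for all separating curves simultaneously.
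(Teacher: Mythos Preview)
Your orbit-closure strategy has a genuine gap, and in fact cannot work as stated. You want to show that the closure of $\mathrm{Mod}(\Sigma)\cdot[\rho]$ contains a representation with finite dihedral image, but this is false for many non-compact dihedral representations. Compose with $\log|\cdot|\colon\mathbb{C}^\star\to\mathbb{R}$: the resulting class $\log|\widetilde\rho|\in H^1(\widetilde\Sigma;\mathbb{R})^{-\tau}$ is acted on linearly by the $\omega$-stabiliser through integer matrices preserving the lattice $H^1(\widetilde\Sigma;\mathbb{Z})^{-\tau}$. If $\log|\widetilde\rho|$ is a nonzero integral (or rational) class, its orbit is discrete and bounded away from $0$; hence the orbit closure of $[\rho]$ never meets the locus $|\widetilde\rho|\equiv 1$, and in particular contains no finite-image representation. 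So the Previte--Xia style argument, which works in the compact $\su$ setting, breaks down here precisely because $\mathbb{C}^\star$ is non-compact. Your fallback ``direct surgery'' sentence is also not a proof: twisting along a meridian $a_i$ disjoint from the separating curve $s$ does not change $\rho(s)$, and you do not explain which twists to use or why they can all be made compatible simultaneously.

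The paper's proof is entirely different and \emph{constructive}. It first disposes of the $\su$-valued case (already handled in the spherical subsection), then normalises $\epsilon\circ\rho$ so that only $b_1$ is odd. The key idea is to exploit non-compactness rather than fight it: since $\rho$ is not $\su$-valued, the real homomorphism $f=\log|\rho|$ restricted to the remaining handles is nontrivial, and one uses powers of Dehn twists to make certain partial sums $\sum_{i\geq k} f(b_i)$ nonzero (indeed large). A single Dehn twist along the curve $b_1b_2\cdots b_g$ then makes every $a_i$ odd, and a direct computation shows that each product of commutators $\prod_{i\geq k}[\rho(a_i),\rho(b_i)]$ is diagonal with eigenvalue $\prod_{i\geq k}\lambda_i^{\pm 2}$, hence loxodromic. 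This yields the compatible sausage decomposition directly, with no appeal to orbit closures or to \cref{prop:finite}.
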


Fix $a_1, \ldots, b_g$ a standard generating set for $\pi_1(\Sigma)$.
Let us observe that the curves freely homotopic to $a_i$ for $1\leqslant i \leqslant g$ and the ones freely homotopic to $\prod_{i=k}^g \left [a_i, b_i\right ]$ can be completed in a sausage type pants decomposition, see \cref{fig:genus5vert}.

\begin{figure}[h]
\includegraphics[scale=0.45]{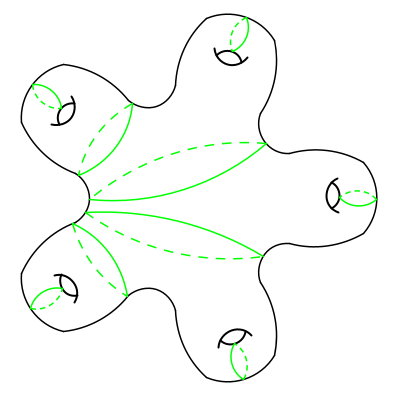}
\caption{Part of a sausage type pants decomposition.}
\label{fig:genus5vert}
\end{figure}

\begin{proof}We have proven this result when $\rho$ takes its values in $\mathrm{SU}_2$. Therefore we now assume that $\rho$ does not admit a conjugate in $\mathrm{SU}_2$. 

It suffices to prove that we can assume that the two following conditions are met, after precomposing $\rho$ with an automorphism of $\pi_1(\Sigma)$:
\begin{enumerate}
\item $\epsilon\circ \rho(a_i) = 1$ for all $1\leqslant i\leqslant g$
\item $\rho\left (\prod_{i=k}^g \left [a_i, b_i\right ]\right )$ is loxodromic for $1 < k\leqslant g$.
\end{enumerate}
Indeed we have seen that the curve freely homotopic to these loops form part of a pants decomposition of sausage type. If $\rho$ satisfies these two conditions, then the restriction of $\rho$ to each pair of pants is not reducible: the curve homotopic to $\prod_{i=k}^g \left [a_i, b_i\right ]$ preserves only the lines generated by the vectors of the standard basis of $\mathbb C^2$, while the one homotopic to $a_i$ does not preserve any of them. Moreover, two of the curves of each pants have zero entries on the diagonal and hence are not central. The third one is non-central as well because it is loxodromic.

We now prove that we may assume that those two conditions are met, with an action of the mapping class group.

The mapping class group  acts transitively on the set of epimorphisms $\pi_1(\Sigma)\longrightarrow \mathbb Z_2$, therefore as before we may assume that $\epsilon\circ \rho(\gamma) = 0$ for all $\gamma \in \{a_1, b_1, \ldots, b_g\}\setminus \{b_1\}$ and $\epsilon\circ \rho(\gamma) = 1$ for $\gamma = b_1$. As before we can conjute $\rho$ by a diagonal matrix so that $\rho(b_1) = \begin{pmatrix}
0 & 1\\
-1 & 0
\end{pmatrix}$. The matrices $\rho(a_1)$ and $\rho(b_1)$ must commute, hence $\rho(a_1) \in \left \{ \begin{pmatrix}
1 & 0\\
0 & 1
\end{pmatrix}, 
-\begin{pmatrix}
1 & 0\\
0 & 1
\end{pmatrix}\right \}$.
Note that the restriction $\rho_{\vert \Sigma_{g-1}}$ of $\rho$ to the subsurface obtained by removing the handle contaning $a_1$ and $b_1$ takes its values in $\ker \epsilon \simeq \mathbb C^\star$.
Let $f = \log |\rho_{\Sigma_{g-1}}|\colon \pi_1(\Sigma)\longrightarrow \mathbb R$. 
This homomorphism is not trivial since $\rho$ does not take its values in $\mathrm{SU}_2$. 
We may act by the mapping class group of $\Sigma_{g-1}$ to make $f(a_g)$ non-zero: we may exchange the handles so that either $f(a_g)$ or $f(b_g)$ does not vanish. Then applying a Dehn twist along $b_g$ if necessary we may assume that $f(a_g)\neq 0$.
Applying a power Dehn twist along $a_g$ we may assume that $f(b_g) + \sum_{i=1}^{g-1} f(b_i) > 1$.

Let $\gamma = b_1\ldots b_{g-1} b_g$. Let us apply a Dehn twist along a curve freely homotopic to $\gamma$. It leaves unchanged each $b_i$ and changes each $a_i$ so that $\epsilon\circ \rho(a_i) = 1$.
Moreover \[\prod_{i=k}^g \left [\rho(a_i), \rho(b_i)\right ] = \prod_{i=k}^g \left [\begin{pmatrix}
0 & \mu_i\\
-\mu_i^{-1} & 0
\end{pmatrix}, \begin{pmatrix}
\lambda_i & 0\\
0 & \lambda_i^{-1}
\end{pmatrix}\right ] = \prod_{i=k}^g \left [ \begin{pmatrix}
\lambda_i^{-2} & 0\\
0 & \lambda_i^2
\end{pmatrix}
\right ].\]

Therefore we have $\rho\left (\prod_{i=k}^g \left [a_i, b_i\right ]\right ) = \begin{pmatrix}
\prod_{i\geqslant k} \lambda_i^{-2} & 0 \\
0 & \prod_{i\geqslant k} \lambda_i^{2}
\end{pmatrix}$ that is loxodromic for every $1 < k \leqslant g$.
\end{proof}

\section{Representations onto finite subgroups of $\su$ or $\so$}
\label{sec:finite}
We recall that pants decomposition of sausage were introduced in Figure \ref{fig:saussage}. We will also consider pants decomposition of \textit{square type}, to be the ones whose dual graph is represented in Figure \ref{fig:xyz}.

This section is devoted to the proof of the following proposition:
\begin{proposition}\label{prop:finite}
	Let $\Sigma$ be a closed compact oriented surface.
	\begin{itemize}
		\item[-] 
		Let $G$ be a nonabelian finite subgroup of $\so,$ and $\rho:\pi_1(\Sigma)\longrightarrow G$ be an epimorphism. Then there is a pants decomposition of $\Sigma$ of sausage type,  and there is one of square type, which are compatible with $\rho.$
		\item[-] 
		Let $\rho:\pi_1(\Sigma) \longrightarrow Q_8$ be an epimorphism. Then there is pants decomposition of $\Sigma$ of square type which is compatible with $\rho.$
	\end{itemize}
\end{proposition}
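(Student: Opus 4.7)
The proof proceeds by case analysis on the target group $G$. The finite nonabelian subgroups of $\mathrm{SO}_3$ are, up to conjugacy, the dihedral groups $D_n$ for $n\geq 3$ together with the Platonic groups $A_4$, $S_4$, and $A_5$; for each such $G$ we must produce a sausage-type and a square-type compatible decomposition, whereas for the exceptional $Q_8\subset\mathrm{SU}_2$ only the square type is possible. Since $G$ is finite, the set of epimorphisms $\pi_1(\Sigma)\twoheadrightarrow G$ modulo conjugation in $G$ is finite, so $\mathrm{Mod}(\Sigma)$ has only finitely many orbits on it, and by \cref{lem:mcg} it suffices to exhibit one convenient representative in each orbit for which the desired decomposition is visibly compatible.

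The first step is to enumerate the $\mathrm{Mod}(\Sigma)$-orbits of epimorphisms for each $G$. In the stable range this follows from the theorem of Dunfield--Thurston identifying the orbits with a class in $H_2(G;\Z)$; the low-genus cases are handled by hand via Nielsen-type moves on generating tuples. For each orbit I would then write a representative $\rho_0$ in terms of a standard symplectic basis $(a_i,b_i)_{i=1}^g$ of $\pi_1(\Sigma)$ chosen so that the values of $\rho_0$ on the $3g-3$ curves of the sausage (or square) decomposition are transparent. The two compatibility conditions then become finite checks: non-triviality of $\rho_0$ on each curve, and non-commutativity of the pair of images associated to each pants. For a finite subgroup of $\mathrm{SO}_3$, two non-commuting elements cannot share a rotation axis and therefore have no common fixed point on $S^2=\mathbb{CP}^1$, which gives the irreducibility of the restriction. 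For the groups $D_n, A_4, S_4, A_5$ both conditions can always be arranged, giving the sausage and square compatible decompositions in each orbit.

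The exception of $Q_8$ and the sausage/square dichotomy are explained by the identity $[Q_8,Q_8]=\{\pm 1\}=Z(Q_8)$. Any separating simple closed curve $s\subset\Sigma$ lies in the commutator subgroup of $\pi_1(\Sigma)$, so $\rho(s)\in[Q_8,Q_8]=\{\pm 1\}$ for every $\rho:\pi_1(\Sigma)\twoheadrightarrow Q_8$; but $\pm I\in\slC$ are unipotent, so the first compatibility condition fails at $s$. Every sausage-type decomposition contains separating curves (the $\prod_{i=1}^k[a_i,b_i]$) and hence cannot be compatible with any $Q_8$-representation; this forces the use of a decomposition whose dual graph has no bridges. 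The square-type has this property by definition, and one checks directly that the epimorphism can be arranged to give a non-commuting pair on each pants; since any two non-commuting elements of $Q_8$ generate the whole group, each such restriction is irreducible.

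The main obstacle is the combinatorial bookkeeping: in low genus the Dunfield--Thurston classification is not fine enough, and the orbits for the Platonic groups must be enumerated directly. Each per-orbit compatibility verification is a short finite computation, but one must treat many orbits and do so uniformly for both decomposition types.
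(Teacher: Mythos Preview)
Your overall strategy matches the paper's: classify the $\mathrm{Mod}(\Sigma)$-orbits of epimorphisms onto each finite $G$, then exhibit one compatible representative per orbit and invoke \cref{lem:mcg}. The execution differs in two places. For the orbit classification the paper does not use Dunfield--Thurston; it cites \cite[Proposition~1.8]{LF21}, which says that for every $g\geq 2$ the $\mathrm{Mod}(\Sigma_g)$-orbits on surjections $\pi_1(\Sigma_g)\twoheadrightarrow G$ (for $G=D_n,A_4,S_4,A_5$) are distinguished precisely by whether the representation lifts to $\su$. This gives at most two orbits per $G$ with no stable-range hypothesis and no low-genus casework; the single-orbit statement for $Q_8$ is proved directly. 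For the representatives the paper works not with a symplectic basis but with $G$-cocycles on a hexagonal cell decomposition of $\Sigma_g$ adapted to the given pants graph: one assigns values $x,y,z$ (square type) or $x,a,y$ (sausage type) to a few edges, and the hexagon relation becomes $xyz=1$ or $[x,a]y=1$. Compatibility and surjectivity then reduce to picking a non-commuting generating pair in $G$, listed explicitly for each group; the lifting/non-lifting case split is handled by a sign $\varepsilon(c)\in\{\pm I_2\}$ computed from the cocycle, flipped by inserting a single commuting order-$2$ element into one hexagon. This cocycle bookkeeping is what lets the paper treat both decomposition types and both orbits uniformly, whereas your plan leaves the per-orbit verifications as ``short finite computations'' to be repeated many times. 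Your observation that $[Q_8,Q_8]=Z(Q_8)$ forces every separating curve to have central (hence trace $\pm 2$) holonomy, obstructing the sausage type for $Q_8$, is correct and is a point the paper leaves implicit.
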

To exhibit such appropriate epimorphisms into finite groups, we will define them using cocycles on a cellular decomposition of the surface, which we define in Section \ref{sec:cocycle}.. 
\subsection{Representations as holonomy of cocycles}
\label{sec:cocycle}

For $X$ a CW-complex, we write $\mathcal{C}^i(X)$ for the set of its oriented $i$-dimensional cells.
\begin{definition}\label{def:cocycle}
	Let $X$ be a CW-complex and $G$ a group. A $G$-cocycle on $X$ is a map $c: \mathcal{C}^1(X) \longrightarrow G$ that satisfies the following properties:
	\begin{itemize}
		\item[-] For any oriented edge $e \in \mathcal{C}^1(X),$ we have $c(\overline{e})=c(e)^{-1}$ where $\overline{e}$ is the edge $e$ with opposite orientation.
		\item[-] For any $2$-cell $w\in \mathcal{C}^2(X)$ with boundary $e_1  e_2 \ldots e_k,$ we have $$c(e_1)c(e_2)\ldots c(e_k)=1_G.$$
	\end{itemize} 
\end{definition}
For $G$ a group, $G$-cocycles on a CW complex $X$ correspond to the "local" version of representations of $\pi_1(X)$ into $G.$ Indeed, one recovers representations of $\pi_1(X)$ taking the holonomy:
\begin{lemma}\label{lemma:cocycle}
	For $c$ a $G$-cocycle on a CW-complex $X,$ for $x_0\in \mathcal{C}^0(X)$ and $\gamma=e_1\ldots e_n$ a loop on the $1$-skeleton of $X$ based at $x_0,$ let us write
	$$\hol_c(\gamma)=c(e_1)\ldots c(e_n).$$
	Then:
	\begin{itemize}
		\item[-] For $c$ a $G$-cocycle, the map $\gamma \mapsto \hol_c(\gamma)$ is a morphism $\pi_1(X,x_0)\longrightarrow G.$
		\item[-] For any morphism $\varphi :\pi_1(X,x_0)\longrightarrow G,$ there is a $G$-cocycle on $X$ such that $\varphi=\hol_c.$
	\end{itemize}
\end{lemma}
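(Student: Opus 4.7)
The plan is to prove both parts by exploiting the standard description of $\pi_1(X,x_0)$ via a spanning tree of the $1$-skeleton $X^{(1)}$ and the relators coming from attaching maps of $2$-cells.

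For part (1), I would first observe that the formula $\hol_c(e_1\cdots e_n) = c(e_1)\cdots c(e_n)$ visibly defines a map on the free monoid of edge paths in $X^{(1)}$ starting and ending at $x_0$, and that this map is multiplicative under concatenation and sends $\bar\gamma$ to $\hol_c(\gamma)^{-1}$ thanks to the first axiom of \cref{def:cocycle}. It then remains to show that $\hol_c$ descends to $\pi_1(X,x_0)$. By standard CW-complex theory, two edge loops are homotopic in $X$ if and only if they are related by a finite sequence of elementary moves of two kinds: (i) insertion or deletion of a subword of the form $e\bar e$, and (ii) replacement of a subpath $e_1\cdots e_j$ by $\bar e_k\cdots \bar e_{j+1}$ whenever $e_1\cdots e_k$ is (a cyclic rotation of) the boundary of a $2$-cell. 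The first cocycle axiom handles move (i), and the second handles move (ii). Hence $\hol_c$ passes to a group homomorphism $\pi_1(X,x_0)\to G$.

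For part (2), given $\varphi:\pi_1(X,x_0)\to G$, I would choose a spanning tree $T$ of $X^{(1)}$ and, for every vertex $v$, let $\tau_v$ denote the unique reduced edge path in $T$ from $x_0$ to $v$. For an oriented edge $e\in \mathcal{C}^1(X)$ with initial vertex $v$ and terminal vertex $w$, set
\[
c(e) \;=\; \varphi\bigl([\tau_v\cdot e\cdot \bar\tau_w]\bigr),
\]
which automatically satisfies $c(\bar e)=c(e)^{-1}$ and gives $c(e)=1_G$ whenever $e\subset T$. For any $2$-cell with boundary $e_1\cdots e_k$, the corresponding concatenation $\tau_{v_0}\cdot e_1\cdots e_k\cdot \bar\tau_{v_0}$ is null-homotopic in $X$, so its image under $\varphi$ is $1_G$; the intermediate $\bar\tau_{v_i}\tau_{v_i}$ terms cancel after applying $\varphi$, yielding the second cocycle relation. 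Thus $c$ is a $G$-cocycle, and by construction $\hol_c$ agrees with $\varphi$ on every loop of the form $\tau_v\cdot e\cdot\bar\tau_w$. Since these loops generate $\pi_1(X^{(1)},x_0)$ (a classical consequence of the spanning-tree model), they surject onto $\pi_1(X,x_0)$, so $\hol_c=\varphi$.

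I do not expect any real obstacle here: both parts are essentially the standard translation between cocycles on a CW-complex and homomorphisms out of its fundamental group. The only point requiring minor care is the reduction of homotopies between edge loops to the two elementary moves used in (1); this is where the CW-structure (and not merely a graph structure) intervenes through the boundary relations of $2$-cells.
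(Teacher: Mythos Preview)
Your proposal is correct and follows essentially the same approach as the paper: for part~(1) you invoke the $2$-cell relations to see that $\hol_c$ is homotopy-invariant, and for part~(2) you use a spanning tree of $X^{(1)}$ to produce the cocycle. The only cosmetic difference is that the paper phrases part~(2) by contracting the tree $T$ to obtain a single-vertex complex $\tilde X$ (where cocycles are visibly the same as representations) and then extending back with $c|_T\equiv 1_G$, whereas you write the equivalent explicit formula $c(e)=\varphi([\tau_v\, e\, \bar\tau_w])$; these produce the same cocycle.
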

\begin{proof}
	Any loop in $X$ can be homotoped to lie in the $1$-skeletton of $X,$ and any homotopy between loops in the $1$-skeleton can be homotoped to lie in the $2$-skeleton. Moreover, the second condition of Definition \ref{def:cocycle} ensures that the holonomy of a loop depends only on its homotopy class.
	
	For the second point, let us consider a covering tree $T$ in the $1$-skeleton and let $\tilde{X}=X/\langle T\rangle$ be the CW-complex obtained by contracting $T$ to a point. Then since $\tilde{X}$ has only one $0$-cell, a choice of $G$-cocycle on $\tilde{X}$ is equivalent to a choice of representation $\rho:\pi_1(\tilde{X})\longrightarrow G.$ Note that $\tilde{X}$ has the same $\pi_1$ as $X,$ and extending a $G$-cocycle on $\tilde{X}$ by setting $c(e)=1_G$ for any $e\in T,$ one gets a $G$-cocycle whose holonomy is $\rho.$
\end{proof}
$G$-cocycles with the same holonomy can be related thanks to the following proposition:
\begin{lemma}
	Let $X$ be a CW-complex, $G$ a group, $x_0\in \mathcal{C}^0(X)$ and let $c$ and $c'$ be two $G$-cocycle with the same holonomy. Then there exists a map $d:C^0(X)\longrightarrow G$ such that $d(x_0)=1_G$ and for any oriented edge $e$ with $\partial e=x \cup \overline{y}$ we have
	$$c'(e)=d(x)c(e)d(y)^{-1}.$$
\end{lemma}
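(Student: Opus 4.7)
The plan is a standard coboundary/gauge-transformation argument. Assuming $X$ is path-connected (otherwise one treats each connected component separately, choosing a basepoint in each), I pick for every vertex $v\in \mathcal{C}^0(X)$ a path $\gamma_v$ in the $1$-skeleton from $x_0$ to $v$, taking $\gamma_{x_0}$ to be the constant path at $x_0$. I then define
\[
d(v) := \hol_{c'}(\gamma_v)^{-1}\,\hol_c(\gamma_v)\in G,
\]
which automatically gives $d(x_0)=1_G$.

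The first step is to verify that $d(v)$ does not depend on the choice of $\gamma_v$. If $\gamma_v'$ is another path from $x_0$ to $v$ in the $1$-skeleton, the concatenation $\gamma_v\cdot\overline{\gamma_v'}$ is a loop based at $x_0$. Applying the hypothesis $\hol_c=\hol_{c'}$ to this loop yields
\[
\hol_c(\gamma_v)\,\hol_c(\gamma_v')^{-1} = \hol_{c'}(\gamma_v)\,\hol_{c'}(\gamma_v')^{-1},
\]
which rearranges to $\hol_{c'}(\gamma_v)^{-1}\hol_c(\gamma_v) = \hol_{c'}(\gamma_v')^{-1}\hol_c(\gamma_v')$, so $d$ is well defined on $\mathcal{C}^0(X)$.

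For the compatibility with edges, given an oriented edge $e$ from $x$ to $y$ (the reading I adopt for $\partial e = x\cup\overline y$), the loop $\gamma_x\cdot e\cdot\overline{\gamma_y}$ based at $x_0$ has equal $c$- and $c'$-holonomy, i.e.
\[
\hol_c(\gamma_x)\,c(e)\,\hol_c(\gamma_y)^{-1} = \hol_{c'}(\gamma_x)\,c'(e)\,\hol_{c'}(\gamma_y)^{-1}.
\]
Solving for $c'(e)$ and regrouping terms according to the definition of $d$ immediately yields $c'(e)=d(x)\,c(e)\,d(y)^{-1}$, which is the desired formula. One should also briefly observe consistency with the convention $c(\overline e)=c(e)^{-1}$: applying the same identity to the reversed edge $\overline e$ and inverting both sides recovers the relation for $e$, so no extra check is needed.

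There is essentially no genuine obstacle here; the only care required is orientation bookkeeping. In particular, the precise order in the definition $d(v)=\hol_{c'}(\gamma_v)^{-1}\hol_c(\gamma_v)$ (rather than the reverse) is exactly what makes $d(x)$ appear on the left and $d(y)^{-1}$ on the right, and reversing the convention on $\partial e$ would simply require reversing this order.
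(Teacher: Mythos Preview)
Your proof is correct and is the standard gauge-transformation argument. The paper's proof is the same idea with slightly different bookkeeping: it fixes a spanning tree, defines $d$ along the tree (so no well-definedness check is needed), and then verifies the relation on the remaining edges via loops---exactly the step you carry out uniformly for every edge.
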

\begin{proof}
	Choose a maximal covering tree out of the $1$-skeleton of $X.$ We can pick the value of $d$ one vertex of $C^0(X)$ at a time, so that the relation $c'(e)=d(x)c(e)d(y)^{-1}$ is satisfied for any edge $e$ belonging to $T.$ One can further assume that $d(x_0)=1_G,$ by picking $x_0$ as the first vertex.

	Now define $c''$ by $c''(e)=d(x)c(e)d(y)^{-1}$ for any edge $e$ of $C^1(X).$ It is clear that $c''$ is also a $1$-cocycle, that has the same holonomy as $c$ or $c',$ and $c''$ coincides with $c'$ on $T.$ We claim that it coincides with $c'$ on the remaining edges. Indeed, for any edge $e$ not in $T,$ there is a loop on $C^1(X)$ based at $x_0$ whose only edge not in $T$ is $e.$ Then the fact that $c'$ and $c''$ have the same holonomy and coincide on edges in $T$ implies that they coincide on $e$ too.
\end{proof}
Now let $X=\Sigma_g$ with some fixed cellular decomposition, we will need a criterion for when a representation $\rho=\hol_c$ of $\pi_1(\Sigma_g)$ into $\so$ lifts to a representation $\tilde{\rho}:\pi_1(X)\longrightarrow \su.$ Since any representation of $\pi_1(\Sigma_g)$ into $\su$ is represented by a $\su$-cocycle, $\rho$ will admit a lift if and only if $c$ lifts to a $\su$-cocycle $\tilde{c}.$

For any oriented edge $e$ of the $1$-skeleton of $\Sigma_g,$ choose a lift $\tilde{c}(e)\in \su$ of $c(e)\in \so.$ The lifts can be chosen so that $\tilde{c}(\overline{e})=\tilde{c}(e)^{-1},$ are unique up to multiplication by $\pm I_2.$ Let us fix a choice of lift of the $\so$-cocycle $c$ to a map $\tilde{c}:\mathcal{C}^1(\Sigma_g)\longrightarrow \su,$ and an orientation of $\Sigma_g.$ For $f\in \mathcal{C}^{2,+}(\Sigma_g),$ the set of oriented $2$-cells whose orientations agree with that of $\Sigma_g,$ let $\partial f=e_1\ldots e_k$ be the oriented boundary. We define:
$$\varepsilon(c)=\underset{f\in \mathcal{C}^{2,+}(\Sigma_g)}{\prod}\hol_{\tilde{c}}(\partial f)\in \su$$
Note that since $c$ is a $\so$-cocycle, we have that $\hol_c(\partial f)=1_{\so}$ for any $f\in \mathcal{C}^{2,+}(\Sigma_g),$ hence $\varepsilon(c)\in \lbrace \pm I_2 \rbrace.$ Moreover, since each edge appears twice in the product, and the choice of lifts $\tilde{c}(e)$ are unique up to $\pm I_2,$ the quantity $\varepsilon(c)$ is independant of the choice of lift $\tilde{c}.$
\begin{lemma}\label{lemma:lift} Let $\rho=hol_c$ be a representation of $\pi_1(\Sigma_g,x_0)$ into $\so.$ Then $\rho$ lifts to a $\su$ representation of $\pi_1(\Sigma_g,x_0)$ if and only if $\varepsilon(c)=I_2.$	
\end{lemma}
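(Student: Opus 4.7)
The plan is to recognise $\varepsilon(c)$ as a $\Z/2$-valued cohomological obstruction living in $H^2(\Sigma_g;\Z/2)\cong \Z/2$, coming from the central extension $1\to \{\pm I_2\}\to \su\to \so\to 1$.

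For the \emph{only if} direction, I would argue as follows. Suppose $\rho$ lifts to $\tilde\rho\colon \pi_1(\Sigma_g,x_0)\to \su$. By \cref{lemma:cocycle} there exists an $\su$-cocycle $\tilde c'$ with $\hol_{\tilde c'}=\tilde\rho$. Its entry-wise projection $c'$ to $\so$ is an $\so$-cocycle with the same holonomy as $c$, so by the preceding lemma there is a map $d\colon \mathcal C^0(\Sigma_g)\to \so$ with $c'(e)=d(x)c(e)d(y)^{-1}$ for every edge $e$ with $\partial e=x\cup \overline y$. Choose any set-theoretic lift $\tilde d\colon \mathcal C^0(\Sigma_g)\to \su$ of $d$ and define $\tilde c(e)=\tilde d(x)^{-1}\tilde c'(e)\tilde d(y)$. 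A direct check shows that $\tilde c$ is an $\su$-cocycle and projects to $c$. Being a genuine cocycle, $\hol_{\tilde c}(\partial f)=I_2$ for every $f\in \mathcal C^{2,+}(\Sigma_g)$, so $\varepsilon(c)=I_2$ when computed with this particular lift; independence of the lift finishes this direction.

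For the converse, fix any lift $\tilde c\colon \mathcal C^1(\Sigma_g)\to \su$ of $c$ compatible with orientation reversal, and define a cochain $\alpha\in C^2(\Sigma_g;\Z/2)$ by $\hol_{\tilde c}(\partial f)=(-1)^{\alpha(f)}I_2$ for each $f\in \mathcal C^{2,+}(\Sigma_g)$. Since $\Sigma_g$ has no $3$-cells, $\alpha$ is automatically a cocycle, and the hypothesis $\varepsilon(c)=I_2$ is precisely the statement that $\sum_f \alpha(f)\equiv 0\pmod 2$. Under the isomorphism $H^2(\Sigma_g;\Z/2)\cong \Z/2$ given by evaluation on the mod-$2$ fundamental class, this says $[\alpha]=0$, so there exists $\beta\in C^1(\Sigma_g;\Z/2)$ with $\alpha=\delta\beta$. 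Replacing $\tilde c(e)$ by $(-1)^{\beta(e)}\tilde c(e)$ produces a new lift, still compatible with $e\mapsto \overline e$ because $\Z/2$-cochains are insensitive to edge orientation, for which $\hol_{\tilde c}(\partial f)=I_2$ for every $f$. This modified $\tilde c$ is a genuine $\su$-cocycle lifting $c$, and its holonomy is the desired $\su$-lift $\tilde\rho$ of $\rho$.

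The main obstacle I expect is making the identification of $\varepsilon(c)$ with the pairing of $[\alpha]$ against the fundamental class of $\Sigma_g$ rigorous and verifying that the proposed modifications respect the involution $e\mapsto \overline e$. Both points become transparent once one works with $\Z/2$-coefficients, where orientations play no role; the rest of the argument is formal obstruction theory for the central extension $1\to \Z/2\to \su\to \so\to 1$ applied to the $2$-dimensional CW-complex $\Sigma_g$.
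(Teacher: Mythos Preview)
Your argument is correct. For the \emph{only if} direction you follow exactly the paper's route: realise the lift $\tilde\rho$ as the holonomy of an $\su$-cocycle, compare its projection with $c$ via the gauge-equivalence lemma, lift the gauge $d$ to $\su$, and conclude that $c$ itself lifts, whence $\varepsilon(c)=I_2$.

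For the \emph{if} direction your proof actually supplies what the paper's own proof omits: the paper only establishes $\rho$ lifts $\Rightarrow \varepsilon(c)=I_2$ and is silent on the converse. Your obstruction-theoretic argument---interpreting the face holonomies of a chosen lift as a $\Z/2$-valued $2$-cocycle $\alpha$, identifying $\varepsilon(c)=I_2$ with $\langle[\alpha],[\Sigma_g]_2\rangle=0$, hence $[\alpha]=0$ in $H^2(\Sigma_g;\Z/2)\cong\Z/2$, and then killing $\alpha$ by a sign correction coming from a primitive $\beta$---is the standard way to fill this gap and is correct as written. The verification that the modified lift still satisfies $\tilde c(\overline e)=\tilde c(e)^{-1}$ and that the face holonomies become trivially $I_2$ goes through exactly because $\beta$ takes values in $\Z/2$, as you note.
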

\begin{proof}
	Note that if $c$ lifts to a $\su$ cocycle $\tilde{c},$ then $\varepsilon(c)=I_2,$ since $\varepsilon(c)$ does not depend on the choice of a lift $\tilde{c}:\mathcal{C}^1(\Sigma_g)\longrightarrow \su,$ we must have $\varepsilon(c)=I_2.$ Moreover we claim that if the representation $\rho=\hol_c$ lifts, then so does the cocycle $c.$ Indeed, the lift $\tilde{\rho}$ will be the holonomy of a $\su$ cocycle $c'.$ We may not have that $r(c')=c,$ however, $r(c')$ and $c$ have the same holonomy, hence they are related by a map $d:\mathcal{C}^0(\Sigma_g)\longrightarrow \so,$ so that $r(c')(e)=d(a)c(e)d(b)^{-1}$ if $\partial e=a \cup \overline{b}.$ Since there is no obstruction to lift the map $d$ to $\su,$ after correcting $c'$ by a lift $d'$ of $d,$ we get a lift of the cocycle $c$ to $\su.$

\end{proof}
\subsection{Compatible pants for representations with finite image}
We now want to produce for an irreducible representation of $\pi_1(\Sigma_g)$ in $\so$ or $\su$ with finite image a compatible pants decomposition. We note that for the groups $\so$ and $\su$ being irreducible is equivalent to having nonabelian image. Let us write $r:\su \longrightarrow \so\simeq \su/\lbrace \pm I_2 \rbrace$ for the projection map. When  $\rho:\pi_1(\Sigma_g)\longrightarrow \su$ is irreducible, it can happen that the composition $r\circ \rho$ is also irreducible; in that case, a compatible pants decomposition for $r \circ \rho$ will also be one for $\rho.$ This will allow us to focus on the case of representations into $\so,$ with the exception of representations with image the quaternion group $Q_8,$ as the following lemma shows: 
\begin{lemma}
	Let $G$ be a nonabelian finite subgroup of $\su$ such that $r(G)< \so$ is abelian. Then $G$ is conjugated in $\su$ to $Q_8.$ 
\end{lemma}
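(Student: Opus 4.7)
The plan is to analyze $G$ as a central extension sitting inside $\su$ and exploit the fact that the only element of order $2$ in $\su$ is $-I_2$. First I would note that $\ker(r|_G) = G \cap \{\pm I_2\}$, so if $-I_2 \notin G$ then $r|_G$ is injective and $G \cong r(G)$ is abelian, contradicting the hypothesis. Hence $-I_2 \in G$ and there is a central short exact sequence
\[
1 \longrightarrow \{\pm I_2\} \longrightarrow G \longrightarrow r(G) \longrightarrow 1.
\]

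Next, I would invoke the classification of finite abelian subgroups of $\so$: every such subgroup is either cyclic, or conjugate to the Klein four-group $V_4$ realized as rotations by $\pi$ about three mutually orthogonal axes. If $r(G)$ were cyclic, then $G$ would be a central extension of a cyclic group by $\Z/2\Z$, which is necessarily abelian---contradicting the hypothesis. Therefore $r(G) \cong V_4$ and $|G| = 8$.

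The key algebraic observation is that any $g \in \su$ with $g^2 = I_2$ has eigenvalues $\pm 1$ whose product equals $\det g = 1$, and hence $g = \pm I_2$. Thus $G$ contains a unique involution, namely $-I_2$. Among the two nonabelian groups of order $8$, the dihedral group $D_4$ has five involutions while the quaternion group $Q_8$ has only one, so $G \cong Q_8$ as abstract groups.

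Finally, to upgrade this abstract isomorphism to $\su$-conjugation, I would pick $g \in G$ lying above a nontrivial element of $V_4$: then $g^2 \in \{\pm I_2\}$, and since $g$ is neither the identity nor an involution, $g^2 = -I_2$ and $g$ has order $4$, hence $g$ is $\su$-conjugate to $\mathrm{diag}(i,-i)$. After this conjugation, any $h \in G$ above a second nontrivial element of $V_4$ also squares to $-I_2$, and must anti-commute with $g$ because $[h,g] \in \ker r = \{\pm I_2\}$ and equals $-I_2$ by nonabelianness. The unitary and anti-commutation conditions force $h$ to have vanishing diagonal and off-diagonal entries of the form $\alpha, -\bar\alpha$ with $|\alpha|=1$. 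A final conjugation by a diagonal element of $\su$, which centralizes $g$, normalizes $\alpha$ to $1$ and produces the standard quaternion generators. The main obstacle is precisely this last normalization, but it reduces to a direct normal-form computation in $\su$.
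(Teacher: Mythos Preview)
Your argument is correct and shares its opening moves with the paper's proof: both show $-I_2\in G$ (else $r|_G$ is an isomorphism), classify the finite abelian subgroups of $\so$ as cyclic or Klein-four, and rule out the cyclic case because a central-by-cyclic group is abelian.

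Where you diverge is in the endgame. You first argue abstractly that $|G|=8$ and that $G$ has a unique involution, invoke the classification of nonabelian groups of order $8$ to get $G\cong Q_8$, and then carry out an explicit two-step conjugation in $\su$ to reach the standard copy. The paper bypasses all of this with a single observation: once $-I_2\in G$ one has $G=r^{-1}(r(G))$, and since $r(G)$ is $\so$-conjugate to the concrete diagonal Klein-four group $D$, the group $G$ is $\su$-conjugate to $r^{-1}(D)=Q_8$ (any $\so$-conjugation lifts to $\su$ because $r$ is surjective). This gives both the abstract isomorphism and the $\su$-conjugacy in one stroke, with no need for the order-$8$ classification or the matrix normal-form computation. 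Your route is more explicit and self-contained; the paper's is shorter and exploits the covering map more directly.
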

\begin{proof}
	First we claim that $-I_2$ must be in $G,$ otherwise $G$ and $r(G)$ would be isomorphic. Then $r(G)\simeq G/\lbrace \pm I_2 \rbrace$ is the quotient of $G$ by a subgroup of its center. The subgroup $r(G)< \so$ being abelian, it is either cyclic or (up to conjugation) the subgroup $D < \so$ of diagonal matrices with $\pm 1$ diagonal coefficients and determinant $1.$ However, if $r(G)$ was cyclic, this would imply that $G$ itself is abelian. Hence we are in the second case  which proves the claim since $r^{-1}(D)=Q_8.$
\end{proof}
The next lemma is standard, a proof may be for example found in \cite{artin_2018}:
\begin{lemma}
	Nonabelian finite subgroups of $\so$ are isomorphic to either a dihedral group $D_n$ with $n\geq 3,$ to the symetric group $S_4,$ or the alternating group $A_4$ or $A_5.$
\end{lemma}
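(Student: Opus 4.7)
The plan is to run the classical pole-counting argument due to Klein. Let $G\subset \so$ be a nontrivial finite subgroup and let it act on the unit sphere $S^2\subset \R^3$. Every non-identity element of $G$ is a rotation, hence fixes exactly two antipodal points of $S^2$. Call the set of all such fixed points the set of \emph{poles} $P\subset S^2$; it is $G$-invariant since $G$ permutes rotation axes by conjugation.

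I would first count the cardinality of
\[ \{(g,p)\in (G\setminus\{1\})\times P \mid g\cdot p = p\} \]
in two ways. On one hand, each non-trivial element contributes exactly two poles, giving $2(|G|-1)$. On the other hand, grouping poles by their $G$-orbit $O_1,\dots,O_k$ with stabilizer sizes $n_1,\dots,n_k$ (so $|O_i|=|G|/n_i$ and $n_i\geq 2$), the total equals $\sum_i (|G|/n_i)(n_i-1)$. Equating and dividing by $|G|$ gives the key equation
\[ 2-\frac{2}{|G|}=\sum_{i=1}^k \left(1-\frac{1}{n_i}\right). \]

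Next I would bound $k$. Since $G$ is nonabelian we have $|G|\geq 6$, so the left hand side is at least $5/3$; and each term on the right is at least $1/2$ and at most $1$, forcing $k\in\{2,3\}$. The case $k=2$ forces $n_1=n_2=|G|$, meaning both orbits are singletons fixed by all of $G$, so $G\subset \mathrm{SO}(2)$ is cyclic, contradicting nonabelianness. For $k=3$, ordering $n_1\leq n_2\leq n_3$, one cannot have $n_1\geq 3$ (the RHS would be $\leq 1$), so $n_1=2$, and the equation becomes
\[ \frac{1}{n_2}+\frac{1}{n_3}=\frac{1}{2}+\frac{2}{|G|}. \]
A short case analysis on $n_2\in\{2,3\}$ (values $n_2\geq 4$ are ruled out immediately) yields exactly the solutions $(2,2,n)$ with $|G|=2n$, $(2,3,3)$ with $|G|=12$, $(2,3,4)$ with $|G|=24$, and $(2,3,5)$ with $|G|=60$.

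The last step, and the main content beyond the counting, is to identify each solution with an abstract group. For $(2,2,n)$, the stabilizer of a pole in the third orbit is cyclic of order $n$, fixing a pair of antipodal poles, and the order $2$ elements exchange them; this produces a dihedral $D_n$. For the exceptional orders $12,24,60$, I would identify the third orbit (of size $4,6,12$ respectively) with the vertices of a regular tetrahedron, octahedron, or icosahedron, and invoke the classical computation of the rotation groups of these solids as $A_4$, $S_4$, and $A_5$. The main obstacle is precisely this last identification: the numerical data alone do not immediately pin down the isomorphism type, and one must argue geometrically that the convex hull of the smallest orbit is a Platonic solid preserved by $G$, so that $G$ injects into the known rotation group of that solid and by order equals it.
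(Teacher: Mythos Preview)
Your proposal is correct: it is the classical Klein pole-counting argument and all the arithmetic checks out (the sentence ``the RHS would be $\leq 1$'' is slightly garbled---you presumably mean that after rewriting as $\sum 1/n_i = 1 + 2/|G| > 1$, the sum would be $\leq 1$ if $n_1\geq 3$---but the conclusion is right). The paper itself does not prove this lemma at all: it simply declares the result standard and cites Artin's textbook, so your argument is exactly the kind of proof one would find behind that citation, and there is nothing to compare.
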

\begin{proposition}\label{prop:orbit_finite}\cite[Proposition 1.8]{LF21}
	Let $g\geq 2,$ let $G=D_n$ with $n\geq 3$ or $S_4, A_4$ or $A_5.$
	For $\varepsilon \in \lbrace +,-\rbrace,$ let $\mathrm{Hom}^{s,\varepsilon}(\pi_1(\Sigma_g),G)$ be the set surjective morphisms $\pi_1(\Sigma_g)\longrightarrow G$ that lift (resp. do not lift) to $\su$ if $\varepsilon=+$ (resp. $\varepsilon=-$). Then $\mathrm{Mod}(\Sigma_g)$ acts transitively on $\mathrm{Hom}^{s,\varepsilon}(\pi_1(\Sigma_g),G).$ 
\end{proposition}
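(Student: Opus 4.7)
The plan is to identify the $\mathrm{Mod}(\Sigma_g)$-orbits on the set of surjections $\pi_1(\Sigma_g)\twoheadrightarrow G$ (modulo conjugation in $G$) with the connected components of the moduli space of Galois $G$-covers of $\Sigma_g$. For a fixed finite group $G$, a classical classification (Livingston, Zimmermann, Dunfield--Thurston) states that in the stable range in $g$, these components are parameterized by $H_2(G;\Z)/\mathrm{Out}(G)$. For each of $D_n$, $A_4$, $S_4$, and $A_5$, the Schur multiplier $H_2(G;\Z)$ is cyclic of order at most two, $\mathrm{Out}(G)$ acts trivially on it, and the nontrivial central $\Z/2$-extension (when it exists) is precisely the binary cover $r^{-1}(G)\subset \su$.

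The sign $\varepsilon$ is then the Schur-type invariant of the surjection. Concretely, by \cref{lemma:lift}, $\rho$ lifts through $r\colon \su\to \so$ if and only if the pullback $\rho^{*}[e]\in H^2(\Sigma_g;\Z/2)$ vanishes, where $[e]$ classifies the central extension $\su\to \so$. Restricting first to the inclusion $G\hookrightarrow \so$ gives a class in $H^2(G;\Z/2)$ whose evaluation on $\rho_{*}[\Sigma_g]$ coincides with the Schur invariant of $\rho$. This invariant is manifestly $\mathrm{Mod}(\Sigma_g)$-equivariant, so each of $\mathrm{Hom}^{s,\pm}(\pi_1(\Sigma_g),G)$ is a union of orbits, and the stable classification states conversely that sharing the same invariant puts two surjections in the same orbit.

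The main obstacle is that the stable theorems require $g$ to exceed some bound depending on $G$, whereas we need the statement already at $g=2$. Two strategies close this gap. One is to stabilize $\rho$ by adjoining trivially mapped handles to $\Sigma_g$, apply transitivity on $\Sigma_{g+k}$ for large enough $k$, and then reduce back via a handle-slide argument that is compatible with the embedding $\Sigma_g\subset \Sigma_{g+k}$. The alternative, which is the route taken in \cite{LF21}, is to treat each of the four families by hand: one fixes a normal form for an $\varepsilon$-labeled generating tuple of $G$ and verifies directly that the Nielsen transformations realized by Dehn twists on $\Sigma_g$ suffice to bring an arbitrary surjection into this normal form once the lifting invariant has been fixed. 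The small-genus, case-by-case verification is the technical heart of the argument and cannot really be avoided without invoking \cite{LF21}.
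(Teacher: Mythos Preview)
The paper does not prove this proposition; it is quoted from \cite[Proposition~1.8]{LF21}. Your proposal, in the end, also defers to \cite{LF21} for the decisive case-by-case verification, so on that level there is no discrepancy to flag.

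Your conceptual framing via the Schur multiplier and the Livingston--Zimmermann--Dunfield--Thurston stable classification is correct and explains \emph{why} the lifting sign $\varepsilon$ should be the only invariant: for each of $D_n$, $A_4$, $S_4$, $A_5$ the Schur multiplier is cyclic of order at most two and is detected exactly by the obstruction to lifting through $r\colon\su\to\so$. But, as you acknowledge, the stable theorems do not reach $g=2$, and your first proposed workaround---stabilize by trivial handles, apply transitivity on $\Sigma_{g+k}$, then ``reduce back via a handle-slide argument''---has a genuine gap. Equivalence after stabilization does not in general imply equivalence before, and the handle-slide reduction you gesture at would itself amount to a proof of the low-genus statement. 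That route therefore does not stand on its own.

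The argument actually used in \cite{LF21}, whose shape is visible in the paper's proof of the companion statement for $Q_8$ (Proposition~\ref{prop:orbit_quaternion}), is your second strategy carried out explicitly: one passes to a cyclic (or small abelian) quotient of $G$, invokes Edmonds' transitivity for surjections onto cyclic groups to confine most of the standard generators to a proper normal subgroup, and then normalizes the remaining generators by explicit Dehn twists. You correctly identify this hands-on normalization as the technical heart; it is indeed the content of the result rather than a dispensable technicality, and your Schur-multiplier discussion, while informative, does not replace it.
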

We note that for $G=D_n$ with $n$ odd, any surjective morphism lifts to $\su.$ We need to complement this proposition with the case of surjective morphisms onto the quaternion group:
\begin{proposition}\label{prop:orbit_quaternion}
Let $g\geq 2,$ the mapping class group $\mathrm{Mod}(\Sigma_g)$ acts transitively on $\mathrm{Hom}^{s}(\pi_1(\Sigma_g),Q_8)$ 
\end{proposition}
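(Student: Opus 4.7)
The plan is to reduce the question to the abelianization $p\colon Q_8\twoheadrightarrow V_4:=Q_8/\{\pm 1\}\cong(\mathbb{Z}/2)^2$, together with a lift analysis. Given a surjective $\rho\colon\pi_1(\Sigma_g)\to Q_8$, the composition $\bar\rho:=p\circ\rho$ is surjective onto the abelian group $V_4$, hence factors through $H_1(\Sigma_g,\mathbb{F}_2)$ and is classified by a pair of linearly independent classes $(\alpha,\beta)\in H^1(\Sigma_g,\mathbb{F}_2)^2$. The central extension $1\to\{\pm 1\}\to Q_8\to V_4\to 1$ has Schur class $x^2+xy+y^2\in H^2(V_4,\mathbb{F}_2)$; and since $H^2(\Sigma_g,\mathbb{Z})$ is torsion-free, the Bockstein vanishes on $H^1(\Sigma_g,\mathbb{F}_2)$, so that $\alpha\cup\alpha=\beta\cup\beta=0$. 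The lifting obstruction $\bar\rho^*(x^2+xy+y^2)$ thus reduces to $\alpha\cup\beta\in H^2(\Sigma_g,\mathbb{F}_2)=\mathbb{F}_2$, which vanishes since $\rho$ exists.

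Next, $\mathrm{Mod}(\Sigma_g)$ acts on $H^1(\Sigma_g,\mathbb{F}_2)$ through $\mathrm{Sp}(2g,\mathbb{F}_2)$, and by Witt's theorem for the symplectic form over $\mathbb{F}_2$, this action is transitive on pairs of linearly independent classes with a fixed value of $\alpha\cup\beta$. Hence, after applying an element of $\mathrm{Mod}(\Sigma_g)$ we may assume $\bar\rho=\bar\rho_0$ where $(\alpha,\beta)=(a_1^*,a_2^*)$ in a standard symplectic basis $(a_i,b_i)_{i=1}^g$ of $H_1(\Sigma_g,\mathbb{F}_2)$.

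The set of lifts of $\bar\rho_0$ to $Q_8$ is a torsor over $\mathrm{Hom}(\pi_1(\Sigma_g),\{\pm 1\})=H^1(\Sigma_g,\mathbb{F}_2)$. Conjugation in $Q_8$ acts on this torsor by translation by the two-dimensional subspace $\langle a_1^*,a_2^*\rangle$ (conjugation by $i$, $j$, $k$ twists the lift by the characters $a_2^*$, $a_1^*$, $a_1^*+a_2^*$ respectively), so the $Q_8$-conjugacy classes of lifts form a torsor over the quotient $H^1(\Sigma_g,\mathbb{F}_2)/\langle a_1^*,a_2^*\rangle$. The problem thus reduces to showing that the $\mathrm{Mod}(\Sigma_g)$-stabilizer of $\bar\rho_0$ acts transitively on this $(2g-2)$-dimensional torsor.

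The main obstacle is this last transitivity, which I would handle using squared Dehn twists. For any simple closed curve $c$, $T_c^2$ acts trivially on $H_1(\Sigma_g,\mathbb{F}_2)$ and hence stabilizes $\bar\rho_0$; its action on a lift $\rho_0$ twists it by the character $\gamma\mapsto\rho_0(c)^{2\langle\gamma,c\rangle}$. Since every element of $Q_8$ squares to $\pm 1$, this character equals the Poincar\'e dual $\mathrm{PD}(c)\in H^1(\Sigma_g,\mathbb{F}_2)$ precisely when $\bar\rho_0(c)\ne 0$. Choosing $c$ homologous to $a_1$ (resp.\ $a_2$) yields $b_1^*$ (resp.\ $b_2^*$), while choosing $c$ homologous to $a_1+a_k$ or $a_1+b_k$ for $k\ge 3$ yields, modulo $b_1^*$, the classes $b_k^*$ and $a_k^*$. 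These together span $H^1(\Sigma_g,\mathbb{F}_2)/\langle a_1^*,a_2^*\rangle$, yielding the desired transitivity.
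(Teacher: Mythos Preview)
Your argument is correct and takes a genuinely different route from the paper. The paper works with the quotient $Q_8/\langle J\rangle\cong\Z/2$ (not the full abelianization) and uses the known transitivity of $\mathrm{Mod}(\Sigma_g)$ on epimorphisms to cyclic groups: first it normalizes the $\Z/2$-quotient, then uses the surface relation to pin down $\rho(b_1)$, applies explicit Dehn twists to fix $\rho(a_1)$, and finally invokes transitivity on epimorphisms to $\langle J\rangle\cong\Z/4$ on a subsurface to handle the remaining generators. This yields an explicit normal form directly.

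Your approach instead factors through the abelianization $Q_8\to V_4$, invokes Witt's theorem for $\mathrm{Sp}(2g,\mathbb{F}_2)$ to normalize the induced map to $V_4$, and then analyzes the torsor of lifts via squared Dehn twists. This is more structural: the extension-class computation cleanly isolates the lifting obstruction as $\alpha\cup\beta$, and the squared-twist formula $\rho\circ T_c^2=\rho(c)^2\,\mathrm{PD}(c)\cdot\rho$ would apply verbatim to any central $\Z/2$-extension of an elementary abelian $2$-group. The paper's proof, by contrast, is more elementary---it avoids Witt's theorem and cohomological bookkeeping---but is tied to the specific subgroup chain $\{\pm 1\}\subset\langle J\rangle\subset Q_8$. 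One small point worth making explicit in your write-up: you reduce to transitivity on $Q_8$-conjugacy classes of lifts, whereas the proposition is stated for $\mathrm{Hom}^s$ itself; this is harmless since $\rho$ is surjective and hence $\mathrm{Inn}(Q_8)$ is realized by $\mathrm{Inn}(\pi_1(\Sigma_g))\subset\mathrm{Aut}^+(\pi_1(\Sigma_g))$.
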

\begin{proof}
Let $a_1,b_1,\ldots, a_g,b_g$ be standard generators of $\pi_1(\Sigma_g).$ We recall that the quaternion group has generators and relations: $Q_8=\langle I,J,K,-1 |I^2=J^2=K^2=-1, (-1)^2=1, IJ=K,KI=J, JK=I \rangle.$ Following the strategy of the proof of Proposition \ref{prop:orbit_finite}, we will deduce the proposition from the fact that $\mathrm{Mod}(\Sigma_g)$ acts transitively on surjective morphisms onto a cyclic group \cite{Edmonds} (see also \cite[Theorem 3.3]{LF21} for a short proof). Let 
$\rho:\pi_1(\Sigma_g)\longrightarrow Q_8$ be a surjective morphism, then the quotient morphism onto $Q_8/\lbrace J \rbrace\simeq \Z/2\Z$ is surjective. Up to mapping class group action we can assume that $\rho(a_1)\in I\langle J\rangle$ and that $\rho(b_1),\rho(a_i), \rho(b_i)	\in \lbrace J \rbrace$ for $i\geq 2.$ We claim that actually $\rho(b_1)=\pm 1,$ indeed, otherwise one would have $[\rho(a_1),\rho(b_1)]=-1,$ while $[\rho(a_i),\rho(b_i)]=1$ for $i\geq 2.$ Therefore $\rho$ would not satisfy the surface group relation. 

Moreover, up to applying $t_{a_1}^2$ where $t_{a_1}$ is the Dehn twist along $a_1,$ one may assume that $\rho(b_1)=1.$ (as $t_{a_1}^2(b_1)=b_1a_1^2.$) 

Now since $\rho(a_1)\in I\langle J \rangle$ $\rho(b_1)=1$ and $\rho$ is surjective onto $Q_8,$ one must have that $\rho|_{\langle a_2,b_2,\ldots,a_g,b_g\rangle}$ is surjective onto $\langle J \rangle\simeq \Z/4\Z.$ 
Without loss of generality, assume that $a_2$ is mapped to a generator of $\langle J \rangle.$ The loop $b_1a_2$ is representated by a simple closed curve disjoint from $b_1,$ and one has $t_{b_1 a_2}^k(a_1)=a_1(b_1 a_2)^k.$ Hence up to applying a power of this Dehn twist, one may assume that $\rho(a_1)=I.$
Finally, up to mapping class group action of the subsurface with fundamental group $\langle a_2,b_2,\ldots a_g,b_g \rangle,$ one may assume that $\rho(a_1)=I, \rho(b_1)=1, \rho(a_2)=J,\rho(b_2)=1,$ and $\rho(a_i)=\rho(b_i)=1$ for $i\geq 3.$
\end{proof}
We recall that the orbits of the action of $\mathrm{Mod}(\Sigma_g)$ on pants decomposition of $\Sigma_g$ correspond to the isomorphism classes of the dual trivalent graphs of pants decompositions.
Thanks to Proposition \ref{prop:orbit_finite} and \ref{prop:orbit_quaternion}, to show that every surjective morphisms of $\pi_1(\Sigma_g)$ onto a finite nonabelian group $G$ of $\su$ or $\so$ admits a compatible pants decomposition of some fixed type, we only need to exhibit one such surjective morphism which is compatible to a fixed pants decomposition of the same type. In addition, for $G=D_n$ with $n$ even, $S_4,$ $A_4$ or $A_5,$ we need to find one such surjective morphism that lifts to $\su$ and one that does not.

We will exhibit such surjective morphism for two types of pants decomposition: one with a sausage dual graph, and one with "the square chain" dual graph. They represented in Figure \ref{fig:xyz} and \ref{fig:sausage}. Note that the pants in the pants decomposition can be further cut by three arcs into a pair of hexagons, so that $\Sigma_g$ is obtained by gluing two copies of (a banded version of) its dual graph, and so that we get a cellular decomposition of $\Sigma_g,$ with $2$-cells that are hexagons. This is represented in Figure \ref{fig:xyz} and \ref{fig:sausage}.

We define a $G$-cocycle for these cellular decomposition by specifying the holonomy of each arc, and checking that the holonomy around each hexagon is $1.$
\begin{figure}
\includegraphics[scale=0.35]{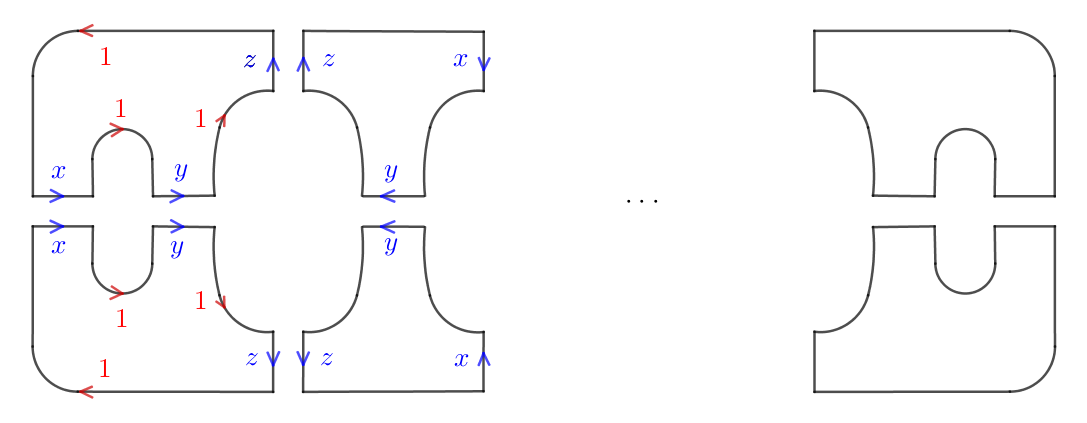}
\caption{The top half of a cellular decomposition of $\Sigma_g$ associated to the square chain dual graph, and an associated cocycle $c.$ The cocycle $c$ has value $1$ on all edges of the lower copy of the dual graph.}
\label{fig:xyz}
\end{figure}
\begin{figure}
\includegraphics[scale=0.35]{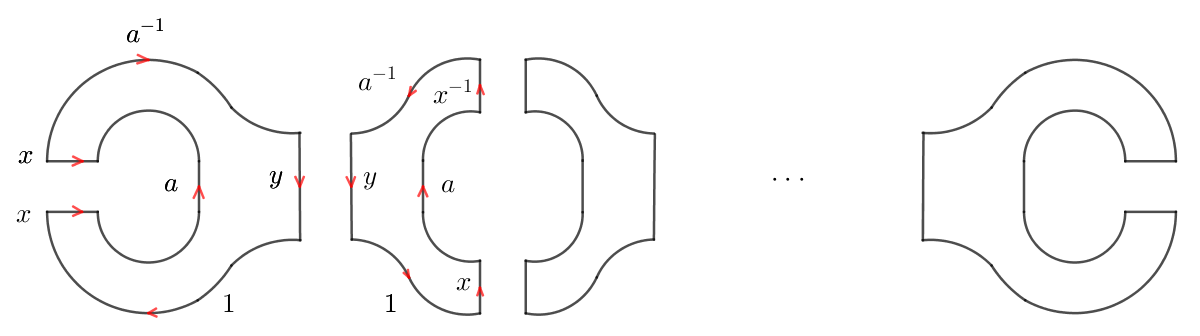}
\caption{The top half of a cellular decomposition of $\Sigma_g$ associated to the sausage dual graph, and an associated cocycle $c.$ The cocycle $c$ has value $1$ on all remaining edges of the lower copy of the dual graph.}
\label{fig:sausage}
\end{figure}
\begin{proposition}\label{prop:xyz}
	Let $\Sigma_g$ be a compact oriented genus $g\geq 2$ surface with a cellular decomposition as in Figure \ref{fig:xyz}. Let $G=D_n, n\geq 3$ or $A_4,S_4,A_5$ or $Q_8.$ Then 
	\begin{itemize}
	\item[-]There exists $x,y,z\in G$ non central elements such that $xyz=1_G,$ and $G=\langle x,y\rangle.$ 
	\item[-]If $x,y,z\in G$ satisfies those conditions, then the holonomy $\rho=\mathrm{hol}_c$ of the cocycle $c$ described in Figure \ref{fig:xyz} is compatible with the pants decomposition shown in the same figure.
	\item[-]If moreover $G\neq Q_8,$ then $\rho$ may be interpreted as a representation $\rho:\pi_1(\Sigma_g)\longrightarrow \so,$ and as such, it lifts to $\su.$
	\end{itemize}
\end{proposition}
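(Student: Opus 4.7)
The proposition has three claims of increasing subtlety, which I would address in order.

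\emph{Existence of triples.} For the first bullet, I exhibit explicit triples for each of the listed groups. In $D_n$ with $n \geq 3$, take two distinct reflections $x, y$: they generate $D_n$, their product is a rotation of order $n \geq 3$ which is non-central, and setting $z = (xy)^{-1}$ yields $xyz = 1_G$. In $A_4$, take two $3$-cycles sharing exactly one letter, e.g.\ $x = (123)$, $y = (134)$, $z = (xy)^{-1} = (243)$; these generate $A_4$ and all three elements are non-central. Parallel concrete choices (a $4$-cycle and an appropriate transposition in $S_4$; two suitably placed $5$-cycles in $A_5$) cover the remaining alternating/symmetric cases. In $Q_8$ take $x = I$, $y = J$, $z = -K$: then $xyz = IJ(-K) = K(-K) = -K^2 = 1$, and all three elements lie outside the center $\{\pm 1\}$.

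\emph{Compatibility.} For the second bullet, I first confirm that the assignment $c$ of Figure \ref{fig:xyz} is a valid $G$-cocycle. Each hexagonal $2$-cell has boundary reading either a cyclic rearrangement of $xyz$, its inverse $z^{-1}y^{-1}x^{-1}$, or a product that reduces to the identity on the lower hemisphere where $c$ is trivial; all these vanish in $G$ by hypothesis. With $\rho = \hol_c$ in hand, compatibility is now direct: every boundary curve of the pants decomposition has holonomy conjugate to $x$, $y$, or $z$, which are non-central and thus non-unipotent as finite-order elements; and on each pair of pants the image of the restriction contains conjugates of $x$ and $y$ and therefore equals all of the non-abelian group $G$, forcing irreducibility.

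\emph{Lifting.} Assume $G \neq Q_8$, so that $G$ is a subgroup of $\so$ and $\rho$ is a genuine $\so$-representation. By \cref{lemma:lift} lifting is equivalent to $\varepsilon(c) = I_2$, so I compute this quantity. Pick $\su$-lifts $\tilde x, \tilde y, \tilde z$ of $x, y, z$ along with the trivial lifts of all edges in the lower hemisphere. Since the projection to $\so$ of $\tilde x \tilde y \tilde z$ is $xyz = 1$, one has $\tilde x \tilde y \tilde z = \pm I_2$; after possibly replacing $\tilde x$ by $-\tilde x$, we may assume $\tilde x \tilde y \tilde z = I_2$. Every lower hexagon has lifted holonomy trivially equal to $I_2$, and every upper hexagon has lifted boundary holonomy equal (up to conjugation by a lifted value, which is immaterial since $\pm I_2$ is central) to $\tilde x \tilde y \tilde z = I_2$. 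The product over all $2$-cells of $\Sigma_g$ is therefore $I_2$, so by the lift-independence part of \cref{lemma:lift}, $\varepsilon(c) = I_2$ and $\rho$ lifts.

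\emph{Anticipated obstacle.} The delicate step is the lifting computation: one must check that a \emph{single} global sign choice for the lifts $\tilde x, \tilde y, \tilde z$ simultaneously makes every upper hexagon's lifted holonomy equal to $+I_2$. The uniformity of the labeling $(x, y, z)$ across upper hexagons in Figure \ref{fig:xyz}, together with the centrality of $-I_2$, should make this automatic, but the argument requires a careful inspection of the cellular decomposition (and, in principle, the sausage version in Figure \ref{fig:sausage}) to verify that the same edges recur with compatible orientations.
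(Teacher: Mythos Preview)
Your approach mirrors the paper's almost exactly: exhibit explicit generating triples, read off compatibility from the cocycle picture, and for the lifting choose $\su$-lifts with $\tilde x\tilde y\tilde z=I_2$ uniformly across the upper hexagons (the paper phrases this last step as directly building the $\su$-cocycle $\tilde c$ rather than computing $\varepsilon(c)$, but these are the same argument).

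One genuine slip, however, is in your $D_n$ case: it is \emph{not} true that any two distinct reflections generate $D_n$ or have product of order $n$. In $D_4$, for instance, the reflections $s$ and $sr^2$ have product $r^{-2}$ of order $2$ (which is moreover central), and together they generate only a Klein four-group. You must pick the reflections so that their product is a rotation of full order $n$, e.g.\ $x=s$, $y=sr$, giving $z=r$. (The paper avoids the issue by mixing types: $x=s$, $y=r$, $z=sr$.) With that corrected, and with the $S_4$ and $A_5$ triples actually written down and checked rather than left as ``parallel concrete choices,'' your proof goes through and matches the paper's.
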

\begin{proof}
It follows from the definition of $c$ that the value of its holonomy $\rho=\mathrm{hol}_c$ on any curve of the pants decomposition is $x,y$ or $z,$ up to conjugation and possibly inversion. Hence, any curve of the pants decomposition is mapped to a non central element. Moreover, the restriction of $\rho$ to any pants of the decomposition is conjugated to the map $F_2 \rightarrow G$ which maps the two generators of the free group $F_2$ to $x$ and $y.$ This and the hypothesis $G=\langle x,y \rangle$ implies that the restriction of $\rho$ to each pants of the decomposition is surjective onto $G,$ hence non abelian. This of course also implies that $\rho$ itself is surjective.

If $G$ is a subgroup of $\so,$ then the $\so$-cocycle $c$ may be lifted to $\su.$ Indeed, the top hexagons of the decomposition are all identical; to lift $c$ it suffices to choose lifts $\tilde{x},\tilde{y}$ and $\tilde{z}$ so that $\tilde{x}\tilde{y}\tilde{z}=1_{\su}.$ On the edges that belong to the bottom hexagons, we simply lift $I_3\in \so$ to $I_2\in \su.$ This defines indeed a $\su$ cocycle $\tilde{c}$ that lifts $c.$

Finally, we prove that in each case $G=D_n,n\geq 3$ or $A_4,S_4,A_5$ or $Q_8,$ a triple $x,y,z\in G$ of non central elements such that $xyz=1_G$ and $G=\langle x,y\rangle$ exists. 
\begin{itemize}
\item[-]For $G=D_n=\langle r,s | s^2=1, rs=sr^{-1}\rangle,$ one can take $x=s, y=r$ and $z=sr.$ These elements are non central when $n\geq 3.$
\item[-]For $G=A_4,$ one can take $x=(12)(34), y=(123)$ and $z=(234).$ Then $x$ and $y$ do not commute (in particular, $x,y$ and $z$ are non central), and one can see that $\langle x,y \rangle$ contains both the subgroup of all double transpositions and a $3$-cycle, hence its order is divisible by $3$ and $4,$ hence $\langle x,y \rangle= A_4.$
\item[-]For $G=S_4,$ one can take $x=(12), y=(1234)$ and $z=(324).$ Again, $x,y$ and $z$ are non central and $S_4=\langle x,y \rangle$ since it contains all transpositions $(i \ i+1)=y^{i-1}x y^{1-i}$ with $1\leq i\leq 4.$
\item[-]For $G=A_5,$ one can take $x=(12)(34), y=(12345), z=(254).$ These elements are obviously non central. We claim that $A_5=\langle x,y\rangle.$ Indeed, since $x,y,z$ have orders $2,5$ and $3,$ the subgroup $\langle x,y \rangle$ has order a multiple of $30,$ and index at most $2.$ But $A_5$ is simple, and a subgroup of index $2$ is always normal, since the index must be $1.$
\item[-]For $G=Q_8=\langle I,J,K,-1 | I^2=J^2=K^2=-1, (-1)^2=1, IJ=K \rangle,$ we can take $x=I, y=J$ and $z=-K,$ as these elements are non central and $Q_8$ is generated by $I$ and $J.$  
\end{itemize}
\end{proof}
Proposition \ref{prop:xyz}, together with Propostion \ref{prop:orbit_finite} and \ref{prop:orbit_quaternion} show that any finite non abelian image $\so$ representation of $\pi_1(\Sigma_g)$ that lift to $\su$ has a compatible pants decomposition. It remains to treat the case of representations that do not lift to $\su.$ We will make use the following lemma:
\begin{lemma}\label{lemma:non-lifting_elements} Let $x,y\in \so$ be commuting order $2$ elements such that $x\neq y^{\pm 1},$ and let $\tilde{x},\tilde{y}$ be any lifts of $x,y$ in $\su.$ Then $[\tilde{x},\tilde{y}]=-I_2.$
\end{lemma}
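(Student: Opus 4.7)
The plan is to exploit the rigid structure of lifts of order-two elements of $\so$ to $\su$, and to apply the same observation three times: to $x$, to $y$, and to the product $xy$.

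First, I would establish the following general fact: if $g\in\so$ has order exactly $2$ and $\tilde g\in\su$ is any lift, then $\tilde g^{\,2}=-I_2$. Indeed, $\tilde g^{\,2}$ projects to the identity in $\so$, hence lies in $\{\pm I_2\}$; if $\tilde g^{\,2}=I_2$ then $\tilde g=\pm I_2$, forcing $g=1_{\so}$ and contradicting the order-two assumption. Applied to $x$ and $y$, this gives $\tilde x^{\,2}=\tilde y^{\,2}=-I_2$, and therefore $\tilde x^{-1}=-\tilde x$ and $\tilde y^{-1}=-\tilde y$.

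Next, I would apply the same observation to the element $xy$. The hypotheses $x^2=y^2=1$ together with $xy=yx$ give $(xy)^2=x^2y^2=1$, and the hypothesis $x\neq y^{\pm 1}$ reduces (since $y^{-1}=y$) to $x\neq y$, i.e.\ $xy\neq 1_{\so}$. Thus $xy$ is of order exactly $2$, and $\tilde x\tilde y$ is one of its lifts, so $(\tilde x\tilde y)^2=-I_2$, that is, $\tilde x\tilde y\tilde x\tilde y=-I_2$.

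Combining these two facts yields
\[
[\tilde x,\tilde y] \;=\; \tilde x\,\tilde y\,\tilde x^{-1}\,\tilde y^{-1} \;=\; \tilde x\,\tilde y\,(-\tilde x)(-\tilde y) \;=\; \tilde x\tilde y\tilde x\tilde y \;=\; -I_2,
\]
which is the desired conclusion. The proof is essentially friction-free; the only point where care is needed is the use of the hypothesis $x\neq y^{\pm 1}$, which is precisely what guarantees that $xy$ is itself of order two (and not the identity), allowing the lifting observation to be applied to the product. Note also that nothing in the argument depends on the particular choice of lifts $\tilde x,\tilde y$, as changing signs of the lifts does not affect the commutator.
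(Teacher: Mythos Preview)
Your proof is correct and takes a genuinely different route from the paper's.

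The paper argues as follows: lifts of order-$2$ elements of $\so$ are order-$4$ elements of $\su$; two \emph{commuting} order-$4$ elements of $\su$ are simultaneously diagonalizable with eigenvalues $\pm i$, hence must be equal or inverse to one another; since $x\neq y^{\pm 1}$ forces $\tilde x\neq \tilde y^{\pm 1}$, the lifts $\tilde x,\tilde y$ cannot commute; and since $[\tilde x,\tilde y]$ projects to $[x,y]=1_{\so}$, it lies in $\{\pm I_2\}$ and therefore equals $-I_2$.

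Your argument shares the first step ($\tilde x^{\,2}=\tilde y^{\,2}=-I_2$) but then diverges: you observe that $xy$ is itself of order exactly $2$, apply the same lifting fact to the product to get $(\tilde x\tilde y)^2=-I_2$, and then compute the commutator directly via $\tilde x^{-1}=-\tilde x$, $\tilde y^{-1}=-\tilde y$. This avoids the simultaneous-diagonalization step entirely and is more self-contained: it uses the single observation ``order-$2$ elements lift to square $-I_2$'' three times and nothing else. The paper's route, by contrast, yields the mild structural byproduct that commuting order-$4$ elements of $\su$ are equal or inverse, which is not needed elsewhere but is perhaps conceptually pleasant. Both proofs are short; yours is arguably the cleaner of the two.
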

\begin{proof}
Elements of order $2$ in $\so$ lift to order $4$ elements in $\su$ since the only order $2$ elements in $\su$ are $\pm I_2.$ Moreover, two order $4$ elements in $\su$ must be co-diagonalizable with eigenvalues $\pm i,$ hence they must be equal or inverse of one another. Since $x\neq y^{\pm 1}$ one also has that $\tilde{x}\neq \tilde{y}^{-1}.$ So $\tilde{x}$ and $\tilde{y}$ are non commuting, therefore $[\tilde{x},\tilde{y}]=-I_2$ since their projections to $\so$ are commuting.
\end{proof}
\begin{proposition}\label{prop:xyz_nonlifting}
For $G=D_n, n\geq 3$ even, or $A_4,S_4$ or $A_5,$ there is a surjective representation $\rho:\pi_1(\Sigma_g)\longrightarrow \so$ with image isomorphic to $G,$ compatible with a square type pants decomposition and such that $\rho$ does not lift to $\su.$
\end{proposition}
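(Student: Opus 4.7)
My plan:

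By Proposition \ref{prop:orbit_finite} and Lemma \ref{lem:mcg}, it suffices to construct, for each $G \in \{D_n\ (n \text{ even}),\ A_4,\ S_4,\ A_5\}$, a single surjective non-lifting representation $\rho: \pi_1(\Sigma_g) \to G$ compatible with some square type pants decomposition. I would then adapt the cocycle-based construction of Proposition \ref{prop:xyz}, producing a cocycle whose lifting obstruction $\varepsilon(c)$ of Lemma \ref{lemma:lift} equals $-I_2$ instead of $I_2$.

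The mechanism is Lemma \ref{lemma:non-lifting_elements}: two distinct commuting order-2 elements in $\mathrm{SO}_3$ have anticommuting lifts in $\mathrm{SU}_2$. Each of the groups $G$ in question contains a Klein four subgroup $V = \{1, u, v, uv\}$ whose non-identity elements are non-central. For $G = D_n$ with $n$ even take $V = \langle r^{n/2}, s \rangle$; for $G \in \{A_4, S_4, A_5\}$ take the standard double-transposition subgroup $\{1, (12)(34), (13)(24), (14)(23)\}$. By Lemma \ref{lemma:non-lifting_elements} one has $[\tilde u, \tilde v] = -I_2$. I would modify the cocycle of Figure \ref{fig:xyz} in a small region: on one of the ``bottom'' hexagons currently labeled $(1,1,1,1,1,1)$, reassign the labels to $(u, v, u^{-1}, v^{-1}, 1, 1)$. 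Its $\mathrm{SO}_3$-boundary product is $[u, v] = 1_G$, so the cocycle relation at this hexagon is preserved, while its $\mathrm{SU}_2$-boundary product is $[\tilde u, \tilde v] = -I_2$. The four shared edges force compensating modifications on the adjacent hexagons (inserting cancelling factors of $V$-elements), and this propagation is resolved within a bounded region of the chain by standard arguments.

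Three things must be verified. First, the cocycle relations hold at every 2-cell after the propagation is completed. Second, compatibility: the holonomies of pants curves are now either the original $x, y, z$ from Proposition \ref{prop:xyz} or products of these with $V$-elements; one checks case by case (per $G$) that all such products remain non-central in $G$, and each pants restriction still contains a pair of generators whose image is $\langle x, y\rangle = G$ or a non-abelian subgroup thereof, hence is non-abelian. Third, the obstruction calculation: all 2-cells except the modified hexagon contribute $I_2$ to $\varepsilon(c)$ as in the proof of Proposition \ref{prop:xyz}, while the modified hexagon contributes $[\tilde u, \tilde v] = -I_2$. Therefore $\varepsilon(c) = -I_2$, and Lemma \ref{lemma:lift} gives that $\rho = \mathrm{hol}_c$ does not lift. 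Surjectivity onto $G$ is automatic since the unmodified region still contains the generators $x, y$ of $G$.

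The main obstacle is the propagation analysis together with preservation of compatibility, especially for $A_4$, whose non-central elements are limited to the three double transpositions and the eight $3$-cycles. Products of the form $x \cdot u$, with $x$ a $3$-cycle from the triple of Proposition \ref{prop:xyz} and $u \in V$, may accidentally coincide with the identity or with an element forcing one of the pants restrictions to factor through the abelian subgroup $V$. This forces a case-by-case choice of the triple $(x, y, z)$ and of the exact placement of the Klein four modification within the cellular decomposition; the other groups ($D_n$ even, $S_4$, $A_5$) have enough non-central elements to allow a more uniform construction.
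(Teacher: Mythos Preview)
Your overall strategy matches the paper's: start from the cocycle $c$ of Proposition~\ref{prop:xyz} and modify it locally so that $\varepsilon(c)$ flips sign, using Lemma~\ref{lemma:non-lifting_elements}. However, your execution is substantially more complicated than necessary, and the parts you label ``standard arguments'' and ``case by case'' are precisely where the work would lie---so as written there is a genuine gap.

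The paper's modification is much cleaner and avoids all propagation. Recall that each hexagon of the cellular decomposition has three edges lying on pants-curve segments and three \emph{arc} edges (the cuts internal to a single pair of pants, shared only between the top and bottom hexagons of that pair of pants). The paper changes exactly two arc edges of a single top hexagon: the sequence $1,\,x,\,1,\,y,\,1,\,z$ becomes $a,\,x,\,a^{-1},\,y,\,1,\,z$, where $a\in G$ is an order-$2$ element with $a\neq x^{\pm 1}$ and $[a,x]=1$. Because arcs are internal to one pair of pants, the only hexagons affected are the top and bottom hexagons of that pair of pants; both still have holonomy $1$ (the top because $a x a^{-1} y z = xyz = 1$, the bottom because one has merely inserted $a$ and $a^{-1}$ into a product of identities). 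The holonomies of all pants curves are untouched, so compatibility and surjectivity are immediate---there is nothing to check case by case, and no propagation at all. Finally $\varepsilon(c')=-\varepsilon(c)$ since the modified top hexagon contributes $\tilde a\tilde x\tilde a^{-1}\tilde y\tilde z=[\tilde a,\tilde x]\cdot\tilde x\tilde y\tilde z=-I_2$ by Lemma~\ref{lemma:non-lifting_elements}. The only remaining task is to exhibit such an $a$ for each $G$, which is easy since in the proof of Proposition~\ref{prop:xyz} the element $x$ was chosen of order $2$ in every case (take $a=r^{n/2}$ for $D_n$ even, $a=(13)(24)$ for $A_4$ and $A_5$, $a=(34)$ for $S_4$).

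By contrast, your modification of a bottom hexagon to $(u,v,u^{-1},v^{-1},1,1)$ touches four edges, at least some of which are either arcs (forcing changes in the top hexagon, whose holonomy $xyz$ relation could then break) or pants-curve segments (forcing changes in adjacent pants' hexagons and altering the pants-curve holonomies). Your worry about $A_4$ is a symptom of this unnecessary complexity: once pants-curve holonomies change, you genuinely do have to re-verify non-centrality and non-abelianness, whereas the paper's approach sidesteps the issue entirely. The missing idea is simply to modify only arc edges, and to use a single element commuting with the existing $x$ rather than a full Klein four group.
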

 \begin{proof}
 Let $c$ be the cocycle defined in the proof of Proposition \ref{prop:xyz}. We will modify $c$ on the edges of a single hexagon of the cellular decomposition of Figure \ref{fig:xyz},
 and obtain another cocycle $c'$ whose holonomy is still surjective onto $G$ and compatible with the pants decomposition, but so that $\varepsilon(c')=-I_2.$ 
 
 By Proposition \ref{lemma:lift} this shows that $\rho'=\mathrm{hol}_{c'}$ does not lift. Take a top hexagon of the cellular decomposition, and replace the holonomy of edges: $1,x,1,y,1,z$ by $a,x,a^{-1},y,1,z$ where $a\neq x^{\pm 1}$ has order $2$ and commutes with $x.$ The holonomy of this hexagon is still $1.$ The corresponding bottom hexagon also still has holonomy $1,$ so this defines a $G$-cocycle $c'.$ We have not changed the holonomy of the pants decomposition curves, so they are still non central, the restriction to any pair of pants is still non abelian since $y$ and $z$ did not commute, and $\rho'$ is still surjective since its restriction to at least $1$ pair of pants of the decomposition coincides with the restriction of $\rho,$ which was surjective. 
 
 Notice that in each case covered in the proof of Proposition \ref{prop:xyz}, the element $x$ was chosen of order $2.$ Lemma \ref{lemma:non-lifting_elements} implies that $\varepsilon(c)=-\varepsilon(c'),$ hence $\rho'$ does not lift to $\su.$
 
 To conclude, it remains to show in each case that there is a choice of an element $a$ of order $2$ such that $a\neq x^{\pm 1}$ and $[a,x]=1.$ We list again the cases:
 \begin{itemize}
 \item[-]If $G=D_n,n\geq 3$ even, the element $a=r^{n/2}$ commutes with $x=s.$
 \item[-]If $G=A_4,$ the element $a=(13)(24)$ commutes with $x=(12)(34).$
 \item[-]If $G=S_4,$ the element $a=(34)$ commutes with $x=(12).$
 \item[-]If $G=A_5,$ the element $a=(13)(24)$ commutes with $x=(12)(34).$
\end{itemize}   \end{proof}
\begin{proposition}\label{prop:sausage}
	Let $\Sigma_g$ be a compact oriented genus $g\geq 2$ surface with a cellular decomposition as in Figure \ref{fig:sausage}. Let $G=D_n, n\geq 3$ or $A_4,S_4$ or $A_5.$ Then 
	\begin{itemize}
	\item[-]There exists $x,y,a\in G,$ with $x,y$ non commuting elements  and such that $[x,a]y=1_G,$ and $G=\langle x,a\rangle.$ 
	\item[-]If $x,y,a\in G$ satisfies those conditions, then the holonomy $\rho=\mathrm{hol}_c$ of the cocycle $c$ described in Figure \ref{fig:sausage} is compatible with the pants decomposition shown in the same figure.
	\item[-]The representation $\rho,$ interpreted as a representation $\rho:\pi_1(\Sigma_g)\longrightarrow \so$ lifts to $\su.$
	\end{itemize}
\end{proposition}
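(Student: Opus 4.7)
The plan is to parallel the proof of Proposition \ref{prop:xyz}, with the key hexagon relation now being $[x,a]y = 1_G$ (read off the sausage cocycle in Figure \ref{fig:sausage}) in place of $xyz = 1_G$.

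I would first verify compatibility assuming that a suitable triple $(x, y, a)$ has been found. From the cocycle in Figure \ref{fig:sausage}, each curve of the pants decomposition has holonomy in the conjugacy class (up to inversion) of $x$, $a$, or $y$, and the restriction of $\rho$ to each pair of pants factors through $\langle x, a \rangle = G$. The hypothesis that $x$ and $y$ do not commute guarantees that $y$ is not central; moreover, if $x$ or $a$ were central then $\langle x, a \rangle$ would be cyclic, contradicting the non-abelianness of $G$. So every curve has non-central holonomy and every pants restriction is non-abelian.

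Next I would argue that $\rho$ lifts to $\su$. Since the commutator of two elements in $\su$ is independent of the choice of lifts modulo $\pm I_2$, any lifts $\tilde x, \tilde a$ of $x, a$ produce a well-defined lift $[\tilde x, \tilde a]$ of $[x,a] = y^{-1}$. Setting $\tilde y := [\tilde x, \tilde a]^{-1}$ gives a lift of $y$ for which the sausage hexagon relation $[\tilde x, \tilde a]\tilde y = I_2$ holds on the nose in $\su$. All other edges of the cellular decomposition can be lifted to $I_2 \in \su$, so the remaining (trivial) hexagon relations lift automatically; the resulting $\su$-cocycle $\tilde c$ lifts $c$, so by Lemma \ref{lemma:lift} the representation $\rho$ lifts.

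Finally I would exhibit witness triples $(x, y, a)$ for each of the groups $D_n$ ($n \geq 3$), $A_4$, $S_4$, and $A_5$. For $D_n = \langle r, s \mid r^n = s^2 = 1,\; srs = r^{-1}\rangle$, the natural candidate $x = s$, $a = r$, $y = r^2$ satisfies $\langle x, a \rangle = D_n$ and $[x, a] = r^{-2}$; non-commutativity of $x$ and $y$ reduces to $r^2 \neq r^{-2}$, i.e., $n \notin \{1, 2, 4\}$. The exceptional case $n = 4$, where every commutator lands in the center $\{1, r^2\}$, is the main obstacle and requires a separate argument --- for example, modifying the cocycle on a single hexagon in the spirit of Proposition \ref{prop:xyz_nonlifting} to install a non-central holonomy there without spoiling the other hexagon relations. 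For $A_4, S_4, A_5$ one picks pairs of generators close to those used in Proposition \ref{prop:xyz} (for instance $x = (123), a = (124)$ in $A_4$, giving $[x,a] = (12)(34)$) and verifies by direct computation that $y = [x, a]^{-1}$ does not commute with $x$.
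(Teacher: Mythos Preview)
Your overall strategy matches the paper's: read off the hexagon relation $[x,a]y=1_G$ from the cocycle, deduce compatibility from the hypotheses on $x,y,a$, lift to $\su$ by choosing $\tilde y=[\tilde x,\tilde a]^{-1}$, and then exhibit explicit triples group by group. The lifting argument is identical to the paper's, and your witness triples (e.g.\ $x=(123),\,a=(124)$ in $A_4$) differ from the paper's choices but work just as well.

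Two points deserve tightening. First, your compatibility paragraph has a gap: from the sausage cocycle the label $a$ sits on arcs \emph{internal} to each pair of pants (the bottom hexagons have holonomy $a\,a^{-1}$), not on halves of pants-decomposition curves; the curves themselves carry holonomies conjugate to $x$ or to $y$. The paper's argument is that every pants restriction contains the non-commuting pair $x,y$ in its image, hence is non-abelian. Your sentence ``the restriction of $\rho$ to each pair of pants factors through $\langle x,a\rangle=G$'' is literally vacuous (the image is contained in $G$), and you never actually justify the clause ``every pants restriction is non-abelian''; non-centrality of $x,a,y$ alone does not give this.

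Second, you are entirely right to isolate $D_4$. Since $[D_4,D_4]=Z(D_4)=\{1,r^2\}$, for \emph{any} $x,a\in D_4$ the element $y=[a,x]$ is central and in particular commutes with $x$; the first bullet of the proposition is therefore false as stated for $G=D_4$. The paper makes exactly your choice $x=s,\,a=r,\,y=r^2$ for even $n$ and asserts ``since $n\geq 3$, $x$ and $y$ are non-commuting'', overlooking $n=4$; so this is a genuine oversight in the paper, not in your reading of it. Note however that your proposed patch---modifying a single hexagon in the spirit of Proposition~\ref{prop:xyz_nonlifting}---is designed to toggle the lifting obstruction $\varepsilon(c)$, not to manufacture a non-central $y$, so it does not obviously repair the $D_4$ case. (For odd $n$ the paper instead takes $a=r^{(n+1)/2}$, $y=r$, which avoids the issue; your uniform choice $a=r$, $y=r^2$ already works for all $n\geq 3$ with $n\neq 4$.)
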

\begin{proof}
We again start by proving the last two points, which are easier: notice that the holonomy of the top hexagons of the cellular decomposition are all $[x,a]y,$ while the holonomy of the bottom hexagons are all $aa^{-1}=1_G.$ Thus $c$ defines a $G$-cocycle if and only if $[x,a]y=1_G.$ The representation $\rho=\mathrm{hol}_c$ is surjective since $G=\langle a,x \rangle$ and the image of $\rho$ contains $a$ and $x,$ and it is irreducible in restriction to each pants since $x$ and $y$ do not commute. 

For the lifting property, notice by one can choose lifts $\tilde{x},\tilde{a},\tilde{y}\in \su$ of $x,y,a\in \so$ so that $[\tilde{x},\tilde{a}]\tilde{y}=I_2,$ since $\tilde{y}$ appears with odd power in the product $[\tilde{x},\tilde{a}]\tilde{y}.$ Then using the same lifts for each hexagon it is clear that we get $\su$-cocycle $\tilde{c}$ that is a lift of $c.$

We now produce elements $x,a,y$ for each choice of finite group $G$ listed in the proposition:
\begin{itemize}
\item[-]If $G=D_n, n\geq 3$ odd, take $x=s, a=r^{(n+1)/2}$ and $y=[a,x]=r^{n+1}=r.$ Then $x$ and $y$ are non commuting and generate $G$ (thus $a$ and $x$ also generate $G.$)
\item[-]If $G=D_n, n\geq 3$ even, take $x=s, a=r$ and $y=[a,x]=r^2.$ Since $n\geq 3,$ $x$ and $y$ are non commuting and moreover $G=\langle a,x\rangle.$
\item[-]If $G=A_4,$ take $x=(123),$ $a=(234)$ and $y=[a,x]=(14)(23).$ The elements $x$ and $y$ are non commuting. We claim that $\langle a,x\rangle=G:$ indeed, $\langle a,x \rangle$ contains the $3$-cycle $x$, and contains a double transposition $y.$ It actually contains the subgroup generated by double transpositions since it also contains $xyx^{-1}=(13)(24).$ Thus its order is divisible both by $3$ and $4,$ therefore it is $A_4.$
\item[-]If $G=S_4,$ take $x=(1234), a=(23)$ and $y=[a,x]=(234).$ It is again clear that $x$ and $y$ are non commuting. The subgroup $\langle a,x \rangle$ contains a $3$ and a $4$ cycle, hence its index is $1$ or $2.$ But the only index $2$ subgroup of any $S_n$ is $A_n,$ and $\langle a,x \rangle$ is not included in $A_4$ since $x\notin A_4,$ so we must have $S_4=\langle x,a\rangle.$
\item[-]If $G=A_5,$ take $x=(13)(24), a=(345)$ and $y=[a,x]=(13452).$ Then $x$ and $y$ are non commuting, and $\langle a,x \rangle$ has order divisible by $2,3$ and $5.$ Thus it has index at most $2$ in $A_5$ therefore by simplicity of $A_5$ one has $A_5=\langle a,x\rangle.$
\end{itemize}
\end{proof}
Again we complement the previous proposition with a proposition with deals with non abelian finite image $\so$ representations that do not lift to $\su:$
\begin{proposition}\label{prop:sausage_nonlifting}
For $G=D_n, n\geq 3$ even, or $A_4,S_4$ or $A_5,$ there is a surjective representation $\rho:\pi_1(\Sigma_g)\longrightarrow \so$ with image isomorphic to $G,$ compatible with a sausage type pants decomposition and such that $\rho$ does not lift to $\su.$
\end{proposition}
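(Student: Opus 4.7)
The plan is to mimic the argument of Proposition \ref{prop:xyz_nonlifting} for the sausage cocycle $c$ produced in Proposition \ref{prop:sausage}. We start from that cocycle, whose holonomy $\rho$ is surjective onto $G$ and compatible with the sausage pants decomposition, and which satisfies $\varepsilon(c)=I_2$ because $\rho$ lifts to $\su$. The plan is to modify $c$ locally on a single top hexagon $H$ of the cellular decomposition of Figure \ref{fig:sausage} so as to obtain a new cocycle $c'$ whose holonomy $\rho'$ is still compatible and surjective, but with $\varepsilon(c')=-I_2$; Lemma \ref{lemma:lift} will then yield a representation of the required form.

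The local modification will insert $b$ and $b^{-1}$ on two edges of $H$ adjacent to an edge bearing an involution label $t$, where $b\in G$ is an involution with $b\neq t^{\pm 1}$ and $[b,t]=1$. More precisely, one subdivides these two boundary edges and assigns the labels $b$ and $b^{-1}$ to the new pieces, leaving all other edge labels intact. Since $btb^{-1}=t$ the boundary word of $H$ is unchanged; moreover the two neighbouring 2-cells sharing the modified edges had all their other labels equal to $1_G$ (the lower copy of the dual graph is trivially labelled), so their holonomies pick up only a $b\cdot b^{-1}=1_G$ factor and are preserved. Hence $c'$ is a genuine $G$-cocycle, and by construction its holonomy differs from that of $c$ only by conjugation by $b$ on those pants-curves whose loops traverse the modified edges. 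Compatibility of $\rho'$ with the sausage pants decomposition and surjectivity of $\rho'$ then follow exactly as in Proposition \ref{prop:xyz_nonlifting}: the pants-curve holonomies remain non-central (they are conjugates of $x$, $a$, $y$ by $b$); the non-commuting pair $(x,y)$ persists up to simultaneous conjugation, so every restriction to a pair of pants stays non-abelian; and at least one pair of pants has restriction identical to $\rho$, so $\rho'$ is again surjective. The sign flip is the same as in Proposition \ref{prop:xyz_nonlifting}: Lemma \ref{lemma:non-lifting_elements} applied to lifts $\tilde b,\tilde t\in \su$ gives $\tilde b\tilde t\tilde b^{-1}=-\tilde t$, so the modified contribution of $H$ to the product defining $\varepsilon$ acquires a factor $-I_2$ while all other hexagon contributions are preserved; thus $\varepsilon(c')=-\varepsilon(c)=-I_2$.

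It remains, for each group $G$, to produce the pair of commuting involutions $(t,b)$ with $t$ a label on $H$. In the notation of Proposition \ref{prop:sausage}, one may take $t=s$ and $b=r^{n/2}$ for $D_n$ with $n$ even (using that $r^{n/2}$ is central in $D_n$); $t=(14)(23)=y$ and $b=(13)(24)$ for $A_4$ (both in the Klein four-subgroup of double transpositions); $t=(23)=a$ and $b=(14)$ for $S_4$ (disjoint transpositions); and $t=(13)(24)=x$ and $b=(14)(23)$ for $A_5$ (both double transpositions supported on $\{1,2,3,4\}$). In each case $b$ and $t$ are distinct, non-inverse, commuting involutions, so Lemma \ref{lemma:non-lifting_elements} applies. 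The main obstacle is the small combinatorial check that the proposed modification really preserves every 2-cell holonomy around $H$; this is essentially the same verification as in Proposition \ref{prop:xyz_nonlifting} and is immediate from the explicit form of $c$ together with Figure \ref{fig:sausage}.
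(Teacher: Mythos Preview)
Your argument follows the paper's proof essentially line for line: start from the lifting cocycle $c$ of Proposition~\ref{prop:sausage}, modify a single top hexagon by inserting a commuting involution $b$ around an edge labelled by an involution $t$, invoke Lemma~\ref{lemma:non-lifting_elements} to flip $\varepsilon$, and verify compatibility and surjectivity persist. The paper makes exactly the same move on the leftmost hexagon, writing out explicitly the new edge-label sequence case by case rather than describing it as an edge subdivision; your choices of $b$ for $A_4$ and $A_5$ differ from the paper's (which takes $b=(12)(34)$ in both cases), but any element of the relevant Klein four-group works equally well.

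One small slip worth noting: the pants-curve holonomies in the sausage cocycle are (conjugates of) $x$ and $y$, not $a$ --- the label $a$ sits on an arc edge, not a boundary edge --- so your parenthetical ``they are conjugates of $x$, $a$, $y$ by $b$'' is not quite right. This does not affect the argument, since what matters is that the modified pants-curve holonomies remain non-central and that $x,y$ still fail to commute after the modification.
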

\begin{proof}
We follow the same strategy as for Proposition \ref{prop:xyz_nonlifting}, modifying the cocycle $c$ of Proposition \ref{prop:sausage} on the edges of a single top hexagon. This time the top hexagon will be the leftmost hexagon in Figure \ref{fig:sausage}. The new holonomy will still be surjective as the restriction to the one-holed torus on the right side of the decomposition will not have changed, and furthermore it will be clear that it is still compatible with the pants decomposition. The sequence of holonomy of edges for the distinguished top hexagon was initially: $x,a,x^{-1},a^{-1},y,1,$ we change it to the following:
\begin{itemize}
\item[-]If $G=D_n,n\geq 3$ even, to $x,ab, x^{-1},b^{-1}a^{-1},y,1$ where $b=r^{n/2}$ commutes with $x=s.$
\item[-]If $G=A_4,$ to $x,a,x^{-1},a^{-1}b,y,b^{-1}$ where $b=(12)(34)$ commutes with $y=(14)(23).$
\item[-]If $G=S_4,$ to $xb,a,b^{-1}x^{-1},a^{-1},y,1$ where $b=(14)$ commutes with $a=(23).$
\item[-]If $G=A_5,$ to $x,ab,x^{-1},b^{-1}a^{-1},y,1$ where $b=(12)(34)$ commutes with $x=(13)(24).$
\end{itemize}
Lemma \ref{lemma:non-lifting_elements} then implies in each case that $\varepsilon(c')=-\varepsilon(c),$ hence the new holonomy does not lift.
\end{proof}
\bibliographystyle{hamsalpha}
\bibliography{biblio}
\end{document}